\def\b1{\mbox{\boldmath $1$}}
\newtheorem{example}{Example}[section]
\theoremstyle{plain}
\newtheorem{thm}{\bf Theorem}[section]
\newtheorem{lem}[thm]{\bf Lemma}
\theoremstyle{remark}
\numberwithin{figure}{section}
\numberwithin{subfigure}{section}
\numberwithin{table}{section}
\numberwithin{equation}{section}
\makeatletter \@addtoreset{equation}{section} \makeatother \makeatletter
\newcommand{\customfootnotetext}[2]{{
		\renewcommand{\thefootnote}{#1}
		\footnotetext[0]{#2}}}
\title{\Large \bf Optimal investment problem in a renewal risk model
	with generalized Erlang distributed interarrival times
	\thanks{This work was supported by the National Natural Science Foundation of China (No. 12201104).}}
\author{
	{Linlin Tian}$^a$\footnote{E-mail: linlin.tian@dhu.edu.cn}~~~{Yixuan Tian}$^a$\footnote{E-mail: 2232539@mail.dhu.edu.cn}~~~{Bohan Li}$^b$\footnote{E-mail: bhli@suda.edu.cn}~~~{Guoqing Li}$^a$\footnote{E-mail: 2212267@mail.dhu.edu.cn}~~~
	\\
	\\
	\small  a. School of Mathematics and Statistics, Donghua University,
	Shanghai, P.R. China 201620 \\
	\small  b. Center for Financial Engineering, Soochow University, Suzhou, Jiangsu, P.R. China   215006
}
\begin{document}
\date{}
\maketitle

\customfootnotetext{$ $}{Corresponding author: Bohan Li}

\begin{center}
\begin{minipage}{130mm}
{\bf Abstract} \;\;
This paper explores the optimal investment problem of a renewal risk model with generalized Erlang distributed interarrival times. The phases of the Erlang interarrival time is assumed to be observable.   The price of the risky asset is driven by the constant elasticity of variance   model (CEV) and the insurer aims to maximize the exponential utility of the terminal wealth  by asset allocation.   By solving the corresponding Hamilton-Jacobi-Bellman (HJB) equation, we establish the concavity of the value function and derive an explicit expression for the optimal investment policy when the interest rate is zero. When the interest rate is nonzero, we obtain an explicit form of the optimal investment strategy, along with a semi-explicit expression of the value function, whose concavity is also rigorously proven.

\vspace{3mm} {\bf Keywords:} Exponential utility;  Renewal process; Stochastic optimal control; Hamilton-Jacobi-Bellman equation
\end{minipage}
\end{center}
\section{Introduction}\label{2323}
The optimal investment problem  of a general insurer has been studied under various settings since the work of
\cite{Hipp2000}.
With the exponential utility,
\cite{Yang2005}
considers the optimal investment strategy for an insurer with jump-diffusion surplus process
when the risky asset follow a Geometric Brownian motion in which the explicit expression of the optimal investment strategy is shown.
 \cite{Wang2007} extends the results of
\cite{Yang2005} to the case of multiple risky assets.
 \cite{Chiu2013} studies the optimal investment for an insurer with cointegrated assets under CRRA utility .
\cite{Gu2012} studies the excess-of-loss reinsurance and investment problems under a constant elasticity of variance (CEV) model. \cite{Zheng2016} studies the same problem as \cite{Gu2012} with the consideration of model misspecification.

The above works are investigated in the Markovian framework where, for the compound Poisson model, the interclaim times are  exponential distributed. However, it is not the case of industrial practice, as the exponential distributed interclaim times have no memory about the time elapsed since the last claim. To overcome such drawback, the renewal process  was introduced to take place of Poission  process in characterizing  the surplus of the insurance company. For example, \cite{Albre2005,Albrecher2006}
 introduce the Erlang($n$) as the distribution of the interarraival time. They calculate the  moment-generating function of the discounted dividends under a horizontal barrier strategies  and show that, in general, a horizontal barrier strategy  is not the optimal dividend strategy.
 \cite{Mish2012} shows that the optimal dividend strategy is of phase-wise barrier strategy for the Erlang($n$) interclaim times. As consequential studies, \cite{Bai2017,Bai2021} investigate the optimal investment and dividend problem under Sparre Andersen model by applying the theory of viscosity solution.
 We refer the readers to \cite{Bergel2015,Li2006} for more optimal control problems about renewal surplus process. To our knowledge, there is generally no explicit solution to the optimization problem involving the renewal process. Thus, the paper explores  the optimal investment problem for a insurance company under a Erlang($n$) distributed interclaim renewal process and study its explicit or semi-explicit solutions.

We introduce the following the renewal claim process.
Let  $\{J_t\}$ be a homogenous Markov chain on the phase state space $\{1,2,\cdots, n\}$ with an intensity matrix of the form
\begin{align*}
\left(
  \begin{array}{ccccc}
    -\lambda_1 & \lambda_1 & 0 & \cdots & 0 \\
    0 & -\lambda_2 & \lambda_2 & \cdots & 0 \\
    \vdots & \vdots & \vdots & \ddots & \vdots \\
    \lambda_n & 0 & 0 & \cdots & -\lambda_n \\
  \end{array}
\right),
\end{align*}
i.e., the phase moves through $1\rightarrow 2 \rightarrow\cdots\rightarrow n\rightarrow 1\rightarrow \cdots$ and stays in phase $i\in\{1,2,\cdots, n\}$ for the intensity with parameter $\lambda_i$ which is also called an exponential clock. When the exponential clock rings,
$J_t$ jumps to the next phase. Specifically,
 if the current phase is $n$, after waiting for  an exponential time with parameter $\lambda_n$, $J_t$ will jump to phase $1$. We assume the claims only occurs when the phase jumps from $n$ to 1. If $n=1$, the renewal process degenerates to the compound Poisson process.

We assume that the phases of $J_t$  can be observed. For example, the  lithium battery-powered electric bicycles are common vehicle in China. Since unstable voltage can cause lithium batteries to explode or catch fire during charging, electric vehicles with lithium batteries are prohibited from being charged indoors.  Each  incident caused by the indoor charging electric bicycle generates extensive coverage across various media platforms, such as TV news, TikTok and X, thereby raising public awareness of such incidents. The government also issues a series of regulations and guidelines to avoid future possible accidents.
At this point, insurance companies can infer from public opinion and human memory of the last disaster that they are in phase 1, where the probability of new claims arising from such accidents is virtually zero. As time passes, public concern diminishes, and platforms stop promoting awareness, though the government may continue to run advertisements on the safe charging requirement for lithium batteries. This marks the transition to phase 2.
As time continues to pass, people's awareness fades, and advertisements expire, and the state turns to phase 3, where the insurance company will face more claim risk. As a result of the above discussion, it is reasonable to assume that the insurance company can observe the phases of $J_t$.

As the Markov chain $J_t$ has   $n$ phases,   the Hamilton-Jacobi-Bellman (HJB) are a system
of $n$-dimensional coupled  equations.
We consider the optimization problem in two cases: interest rate is zero, and interest rate is non-zero. The solutions to the HJB equation take different forms in  two cases. The key challenge lies in proving the concavity of each solution. For the first case, we establish concavity using the Laplace transform of the Markov chain with killing. Furthermore, we derive explicit expressions for both the value function and the optimal investment strategy.
For the second case, expressing the HJB solution it is more challenging to derive an explicit solution to the HJB equation.
We rigorously prove the existence of a solution and show a semi-explicit expression for the solution of the HJB equation.
Furthermore, the concavity of the value function is established by  applying the decoupling argument and  the Banach fixed-point theorem.

Let us compare the Erlang($n$) renewal model with the regime-switching model.
For discussions on regime-switching, refer to \cite{Elliott2011},  \cite{Jiang2012}, \cite{Liu2014}, \cite{zou2014}.
The regime-switching model assume that the model parameters such as the interest rate, claim intensity, return rate, and volatility rate changes according to the current regime, while the regime switching typically do not lead to a direct reduction in wealth. In contrast, the Erlang($n$) model allows both changes in parameters and wealth upon the phase entering a new phase.
Specifically, when the phase jumps from phase $n$ to phase 1, the intensity of interarrival time will turns to that of phase 1 and the wealth will reduce by the size of a claim.
Our present article focus on the optimal investment problem in the context of Erlang($n$) interclaim times.

The main contributions of this work are twofold:
(1) We extend the classical result of optimal investment problems with compound Poisson claims to that of a renewal process with Erlang-distributed interarrival time.  The explicit and semi-explicit solutions to the problems are provided. (2)
The concavity of solutions to the corresponding HJB equation is established. To this end, we employ two different approach for two cases. If the interest rate is zero, the concavity can be proven by applying the Laplace transform to a Markov chain with killing. If the interest rate is a positive constant, we resort to the decoupling argument and Banach fixed point theorem.

The structure of the paper is as follows. Section \ref{model1} introduces the surplus process of the insurance company. The goal of the insurance company is find an optimal investment strategy to maximize the expected exponential utility of terminal wealth. The problem is  divided into two cases. Section \ref{ere2121} studies the case that the interest rate is zero, presenting the explicit forms of both the optimal value function and the optimal strategy. In Section \ref{intere2}, we explore the case that the interest rate is non-zero. We provide the explicit form of the optimal investment policy, and by applying  the decoupling argument, along with the Banach fixed point theorem, we demonstrate the concavity of the solution to the HJB equation.
Section \ref{5} analyzes the sensitivity of optimal strategies and value functions on the model parameters Finally, Section \ref{conclusion} concludes the paper. Some proofs are presented in the Appendix.

\section{Modelling}\label{model1}
We work on a complete probability space $(\Omega,\mathscr{F}, \mathbb{P})$ which satisfies the usual condition. Let $T>0$ be the planning horizon and $\mathscr{F}_t$ stands for the information available before time $t\in[0,T].$
The surplus process $\{C_t, t\ge 0\}$  is defined by
\begin{align}
dC_t=cdt-d\sum_{i=1}^{N_t}Y_i,
\end{align}
where $c>0$ is the premium rate, $N_t$ is a renewal counting process representing the number of claims before time $t$, $\{Y_i\}_{i=1}^{\infty}$ are independent and identically distributed positive random variables, and $Y_i$ represents the size of the $i$-th claim.
 The interclaim times are assumed to be independently generalized Erlang($n$)  distributed.
  We  assume  the state of the Markov chain behind the Erlang($n$) distribution can be observed.
 We also assume that the claim size $\{Y_i\}_{i=1}^{\infty}$  are independent of the markov chain $\{J_t\}$. In addition, we following \cite{Ji2014} to assume the premium $c>\frac{\mathbb{E}(Y_i)}{\sum_{i=1}^n {\lambda_i}^{-1}}$
  which implies that the insurance business tends to be profitable in the long run and ensures that the insurance company's ruin probability is strictly less than one.

The insurer is allowed to invest in a financial market consisting two assets, one risk-free asset and one risky asset. The price process $P_t$ of the risk-free asset follows
\[dP_t=rP_tdt,\]
where $r>0$ is the risk-free interest rate.  We assume that the price process of the risky asset is driven by the CEV model
\[dS_t=S_t(\mu dt+\sigma S_t^\beta dW_t),\]
where $\mu>r$ is the expected instantaneous return rate of the risky asset, $\sigma >0$ is a constant, $\beta\ge 0$ represents the elasticity parameter, and $W_t$ is a standard Brownian motion which is independent of the compound renewal process. When $\beta=0,$ the CEV model degenerates to a geometric Brownian motion.

The investment strategy is denoted by $\{a_t\}_{0\le t\le T}$, where $a_t\in\mathbb{R}$ denotes the total amount of money invested in the risky asset. Under the strategy $a$,   the surplus process of the insurance company follows
\begin{align}\label{sa1wqw}
dX_t^a=(rX_t+(\mu-r)a_t+c)dt+\sigma S_t^\beta a_tdW_t-d\sum_{i=1}^{N_t}Y_i.
\end{align}
We call a strategy $\{a_t\}_{0\le t\le T}$ admissible if for any $t\in[0,T]$, $a_t$  is $\mathscr{F}_t$
progressively measurable, $\mathbb{E}[\int_0^{+\infty}a_t^2S_t^{2\beta}dt]<+\infty$ and the equation \eqref{sa1wqw} admits a unique strong solution. Denote $\mathscr{U}_{ad}$ the set of all admissible strategies.

We consider a optimal investment problem maximizing the expected exponential utility of the terminal wealth. The exponential  utility function is defined as
\[U(x)=-\frac{1}{m}e^{-mx },\]
where $m>0$ is a constant which is known as the absolute risk aversion parameter. Such  utility functions play a crucial role in the field of mathematical finance and actuarial science. On one hand, the exponential utility function offers analytical tractability; on the other hand, \cite{Bu2021} illustrates that by adjusting the parameter
$m$, the exponential utilities can interpolate between a risk-sensitive criterion and a robust criterion.

 For mathematical simplicity, we assume that, for any $i$, $\mathbb{E}(e^{mY_ie^{rT}})<+\infty$. This condition eliminates  the extreme scenario where the expected claim size is too large such that the insurance company could face immediate ruin. If the claim size $Y_i$ follows the exponential distribution with the parameter $\lambda$, then the condition  $\mathbb{E}(e^{mY_ie^{rT}})<+\infty$ is equivalent to $m e^{rT}<\lambda$. In the following context, we will omit the subscript $i$ if there is no ambiguity.

For any investment strategy $\{a_t\}_{0\le t\le T}\in \mathscr{U}_{ad}$ and any initial state $(s,x,i)$,  define the utility of the strategy $a$ as
\begin{align}
J(t,x,s,i;a)=\mathbb{E}\left[U(X_T^a)|X_t=x,S_t=s, J_t=i\right].
\end{align}
The value function is defined as
\begin{align}\label{sszx1223}
V(t,x,s,i)=\sup_{\{a_t\}\in\mathscr{U}_{ad}}J(t,x,s,i;a),
\end{align}
The aim of our paper is to find the optimal policy $a_t^*\in\mathscr{U}_{ad}$ so that $J(t,x,s,i;a^*)=V(t,x,s,i)$.
By dynamic programming principle, we derive the HJB equation of the optimization problem \eqref{sszx1223} which is an $n$-dimensional coupled equation:
\begin{align}\label{hjb01}
\begin{cases}
&v_t(t,x,s,i)+\sup_{a\in\mathbb{R}}\big\{v_x(t,x,s,i)(c+a(\mu-r)+rx)+\frac{1}{2}\sigma^2a^2s^{2\beta}v_{xx}(t,x,s,i)\\
&\quad +\sigma^2as^{2\beta+1}v_{xs}(t,x,s,i)\big\}
+\mu sv_s(t,x,s,i)+\frac{1}{2}\sigma^2s^{2\beta+2}v_{ss}(t,x,s,i)\\
&\quad+\lambda_i(v(t,x,s,i+1)-v(t,x,s,i))=0,\quad i=1,2,\cdots, n-1;\\
&v_t(t,x,s,i)+\sup_{a\in\mathbb{R}}\big\{v_x(t,x,s,i)(c+a(\mu-r)+rx)+\frac{1}{2}\sigma^2a^2s^{2\beta}v_{xx}(t,x,s,i)\\
&\quad+\sigma^2as^{2\beta+1}v_{xs}(t,x,s,i)\big\}+\mu sv_s(t,x,s,n)+\frac{1}{2}\sigma^2s^{2\beta+2}v_{ss}(t,x,s,i)\\
&\quad+\lambda_n(\mathbb{E}[v(t,x-Y,s,1)]-v(t,x,s,i))=0, \quad i=n,
\end{cases}
\end{align}
with boundary condition
\begin{align}\label{hjb02}
v(T,x,s,i)=-\frac{1}{m} e^{-mx},\quad i=1,2,\ldots, n.
\end{align}

We tackle with the problem for  the case of $r=0$ and $r\neq 0$ by different approach. There exists an explicit solution for the case of $r=0$, while no explicit form of solutions for the case of  $r\neq 0$. For the latter case, the solutions to the HJB equations can be expressed as a semi-explicit form.
We first consider the case $r=0$ in the next section.
\section{When the interest rate is 0}\label{ere2121}
If the interest rate is 0,  equation \eqref{hjb01} reduces to the following equations:
\begin{align}\label{hjb0180786}
\begin{cases}
&v_t(t,x,s,i)+\sup_{a\in\mathbb{R}}\big\{v_x(t,x,s,i)(c+a\mu)+\frac{1}{2}\sigma^2a^2s^{2\beta}v_{xx}(t,x,s,i)\\
&\quad +\sigma^2as^{2\beta+1}v_{xs}(t,x,s,i)\big\}
+\mu sv_s(t,x,s,i)+\frac{1}{2}\sigma^2s^{2\beta+2}v_{ss}(t,x,s,i)\\
&\quad+\lambda_i(v(t,x,s,i+1)-v(t,x,s,i))=0,\quad i=1,2,\cdots, n-1;\\
&v_t(t,x,s,i)+\sup_{a\in\mathbb{R}}\big\{v_x(t,x,s,i)(c+a\mu)+\frac{1}{2}\sigma^2a^2s^{2\beta}v_{xx}(t,x,s,i)\\
&\quad+\sigma^2as^{2\beta+1}v_{xs}(t,x,s,i)\big\}+\mu sv_s(t,x,s,i)+\frac{1}{2}\sigma^2s^{2\beta+2}v_{ss}(t,x,s,i)\\
&\quad+\lambda_n(\mathbb{E}[v(t,x-Y,s,1)]-v(t,x,s,i))=0, \quad i=n.
\end{cases}
\end{align}
Note that if there exists a  concave  solution for \eqref{hjb0180786}, for each $i,$  the maximizer of \eqref{hjb0180786} can be obtained as $a^*(t,x,s,i) :=-\frac{\mu v_x(t,x,s,i)+\sigma^2s^{2\beta+1}v_{xs}(t,x,s,i)}{\sigma^2s^{2\beta}v_{xx}(t,x,s,i)}$. In what follows, we look for a continuously differentiable concave solution for \eqref{hjb0180786}.
We take the following {\it ansatz} of the solution:
\begin{align}\label{1123019456de}
\begin{split}
v(t,x,s,i)=-\frac{1}{m}\exp\left\{-m x+\frac{\mu^2}{2\sigma^2} (t-T)s^{-2\beta}\right\}\psi_i(t),i=1,2,\ldots, n,
\end{split}
\end{align}
where $\psi_i(t), i=1,2,\cdots, n$ are some  functions to be determined later.
After direct calculations,
\begin{equation}\label{dsid2313901}
\left\{\begin{array}{l}
v_t=v(t,x,s,i)\frac{\mu^2}{2\sigma^2}s^{-2\beta}-\frac{1}{m}\exp\left\{-m x+\frac{\mu^2}{2\sigma^2} (t-T)s^{-2\beta}\right\}\psi_i'(t),\\
v_x=-m v(t,x,s,i),v_{xx}=m^2v(t,x,s,i),\\
v_s=\beta \frac{\mu^2(T-t)}{\sigma^2}s^{-2\beta -1}v(t,x,s,i),\\
v_{ss}=[\frac{\beta^2  \mu^4 (T-t)^2 }{\sigma^4}s^{-4\beta -2}-\frac{\beta \mu^2(T-t)}{\sigma^2}(2\beta+1)s^{-2\beta -2}]v(t,x,s,i),\\
v_{xs}=-m\beta \frac{\mu^2(T-t)}{\sigma^2}s^{-2\beta -1}v(t,x,s,i).
\end{array}\right.
\end{equation}
Substituting
\begin{align}\label{23u2i2323}
a^*(t,x,s,i)=-\frac{\mu v_x(t,x,s,i)+\sigma^2s^{2\beta+1}v_{xs}(t,x,s,i)}{\sigma^2s^{2\beta}v_{xx}(t,x,s,i)}
 \end{align}
 and \eqref{dsid2313901} into \eqref{hjb0180786} and eliminating same terms
show that  $\{\psi_i\}_{i=1}^n$ satisfies
\begin{align}\label{463583021}
\begin{cases}
\psi_1'(t)-(cm+\frac{(2\beta +1)\mu^2 \beta (T-t)}{2}+\lambda_1)\psi_1(t)+\lambda_1\psi_2(t)=0,\\
\psi_2'(t)-(cm+\frac{(2\beta +1)\mu^2 \beta (T-t)}{2}+\lambda_2)\psi_2(t)+\lambda_2\psi_3(t)=0,\\
\cdots\\
\psi_{n-1}'(t)-(cm+\frac{(2\beta +1)\mu^2 \beta (T-t)}{2}+\lambda_{n-1})\psi_{n-1}(t)+\lambda_{n-1}\psi_{n}(t)=0,\\
\psi_{n}'(t)-(cm+\frac{(2\beta +1)\mu^2 \beta (T-t)}{2}+\lambda_{n})\psi_n(t)+\lambda_{n}\psi_{1}(t)\mathbb{E}(e^{mY})=0.
\end{cases}
\end{align}
Denote   $L(t):=(cm+\frac{2(\beta +1)\mu^2 \beta T)}{2}) t -\frac{2(\beta +1)\mu^2 \beta}{4}t^2$.  Multiplying $e^{-L(t)}$ on both sides of \eqref{463583021} gives
\begin{align*}
\begin{cases}
\varphi_1'(t)-\lambda_1\varphi_1(t)+\lambda_1 \varphi_2(t)=0,\\
\varphi_2'(t)-\lambda_2\varphi_2(t)+\lambda_2 \varphi_3(t)=0,\\
\cdots\\
\varphi_{n-1}'(t)-\lambda_{n-1}\varphi_{n-1}(t)+\lambda_{n-1}\varphi_n(t)=0,\\
\varphi_n'(t)-\lambda_n \varphi_n(t)+\lambda_n \mathbb{E}(e^{mY})\varphi_1(t)=0,
\end{cases}
\end{align*}
or equivalently,
\begin{align}\label{78e0e0367433}
\left(
  \begin{array}{c}
    \varphi_1'(t) \\
    \varphi_2'(t) \\
    \vdots\\
    \varphi_{n-1}'(t) \\
    \varphi_n'(t) \\
  \end{array}
\right)=\hat{Q}\left(
                 \begin{array}{c}
                   \varphi_1(t) \\
                   \varphi_2(t) \\
                   \vdots \\
                 \varphi_{n-1}(t) \\
                 \varphi_n(t) \\
                 \end{array}
               \right),
\end{align}
where
\begin{align*}
\hat{Q}=\left(
       \begin{array}{cccccc}
         \lambda_1 & -\lambda_1 & 0 & \cdots & 0 & 0 \\
          0&  \lambda_2& -\lambda_2 &\cdots  &0 & 0 \\
          0&  0&\lambda_3  & \cdots & 0 & 0 \\
          \vdots& \vdots & \vdots & \vdots & \vdots & \vdots \\
          0& 0 & 0 & \cdots & \lambda_{n-1} &-\lambda_{n-1}  \\
        -\lambda_n \mathbb{E}(e^{mY}) &0  &0  &\cdots  & 0 & \lambda_n \\
       \end{array}
     \right).
\end{align*}
The boundary condition of $\varphi_i(t)$ is
\begin{align} \label{787823eeeqw2}
\varphi_i(T)=e^{-L(T)}>0.
\end{align}
Combining the boundary condition  \eqref{787823eeeqw2}, the explicit solution of $\varphi_i(t)$ can be derived.
 \begin{align}\label{4.tterw1516576}
\left(
  \begin{array}{c}
    \varphi_1(t) \\
    \varphi_2(t) \\
    \vdots\\
    \varphi_{n-1}(t) \\
    \varphi_n(t) \\
  \end{array}
\right)=e^{\hat{Q}(t-T)}\left(
                 \begin{array}{c}
                   \varphi_1(T) \\
                   \varphi_2(T) \\
                   \vdots \\
                 \varphi_{n-1}(T) \\
                 \varphi_n(T) \\
                 \end{array}
               \right)=e^{\hat{Q}(t-T)}\left(
                 \begin{array}{c}
                  e^{-L(T)}\\
                   e^{-L(T)} \\
                   \vdots \\
                e^{-L(T)} \\
                 e^{-L(T)} \\
                 \end{array}
               \right),
 \end{align}
Now we show that the solution $\varphi_i(t), i=1,2\cdots, n,$ are non-negative. It suffices to show that every element of the matrix $e^{\hat{Q}(t-T)}$ is non-negative.
\begin{lem}\label{e2daw21212}
Every element of the matrix $e^{\hat{Q}(t-T)}$ is non-negative.
\end{lem}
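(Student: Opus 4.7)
The plan is to recognize that the statement is the standard non-negativity property of exponentials of Metzler matrices (matrices with non-negative off-diagonal entries). Since $t\le T$, write $\tau := T-t\ge 0$ and reduce the claim to showing that $e^{-\hat{Q}\tau}$ is entry-wise non-negative for all $\tau\ge 0$.

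First I would read off the sign structure of $A:=-\hat{Q}$. From the explicit form of $\hat{Q}$, its diagonal entries are $-\lambda_i<0$, while the only nonzero off-diagonal entries are $A_{i,i+1}=\lambda_i>0$ for $i=1,\dots,n-1$ and $A_{n,1}=\lambda_n\mathbb{E}(e^{mY})>0$. All off-diagonal entries of $A$ are therefore non-negative, so $A$ is Metzler. Note that $A$ is essentially the transpose-free version of a sub-generator with an enlarged $n\to 1$ rate, reflecting the probabilistic picture of the introduction: $e^{-\hat Q\tau}$ is (up to a twist by $\mathbb{E}(e^{mY})$ at each $n\to1$ jump) a Feynman--Kac expectation along $J_t$, which motivates why non-negativity must hold.

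Next I would perform the standard shift trick. Pick any constant $K\ge \max_{1\le i\le n}\lambda_i$ and set $B:=A+KI$. Then $B$ has non-negative entries everywhere: the diagonal entries are $K-\lambda_i\ge 0$ and the off-diagonal entries of $B$ agree with those of $A$. Since $K I$ commutes with $B$,
\begin{equation*}
e^{-\hat Q\tau}=e^{A\tau}=e^{(B-KI)\tau}=e^{-K\tau}\,e^{B\tau}.
\end{equation*}
Because $B$ is entry-wise non-negative, so are all of its powers $B^{k}$, hence the partial sums of the defining series
\begin{equation*}
e^{B\tau}=\sum_{k=0}^{\infty}\frac{(B\tau)^{k}}{k!}
\end{equation*}
are entry-wise non-negative for every $\tau\ge 0$, and the limit inherits this property. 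Multiplying by the positive scalar $e^{-K\tau}$ preserves non-negativity, yielding $e^{-\hat Q\tau}\ge 0$ entry-wise, which is the desired conclusion.

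There is really no serious obstacle here; the only mild subtlety is that the $(n,1)$-entry of $A$ is scaled by $\mathbb{E}(e^{mY})>1$, so $A$ is not literally an intensity matrix of a Markov chain (row sums are positive rather than zero). However, this plays no role in the Metzler argument above, which only uses the signs of the off-diagonal entries, so the shift-and-expand proof goes through verbatim. If a probabilistic proof were preferred, one could instead observe that $e^{-\hat Q\tau}\mathbf{1}$ coincides with the Feynman--Kac functional $\mathbb{E}_i\!\left[\mathbb{E}(e^{mY})^{\,N^{n\to1}_\tau}\right]$ associated with the chain $J$, which is manifestly non-negative; but the algebraic argument is shorter and self-contained.
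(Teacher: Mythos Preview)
Your proof is correct and takes a genuinely different route from the paper. The paper argues probabilistically: it introduces an auxiliary Markov chain $\tilde{J}$ whose exponential clock in phase $n$ is sped up to rate $\lambda_n\zeta$ with $\zeta=\mathbb{E}(e^{mY})$, and shows by a direct backward-equation computation that the Feynman--Kac quantities
\[
f_{ij}(\tau)=\mathbb{E}\!\left[e^{\lambda_n(\zeta-1)\int_0^\tau \mathbf{1}_{\{\tilde{J}_u=n\}}du}\,\mathbf{1}_{\{\tilde{J}_\tau=j\}}\,\Big|\,\tilde{J}_0=i\right]
\]
solve $f'=-\hat{Q}f$ with $f(0)=I$, hence $f(\tau)=e^{-\hat{Q}\tau}$; non-negativity then follows because each $f_{ij}$ is manifestly an expectation of a non-negative random variable. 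By contrast, your argument is purely algebraic: you observe that $-\hat{Q}$ is Metzler, shift by $KI$ to make it entry-wise non-negative, and read off non-negativity of $e^{-\hat{Q}\tau}$ from the power series. Your approach is shorter, completely self-contained, and avoids any stochastic machinery; the paper's approach is longer but yields a probabilistic representation of the individual entries of $e^{-\hat{Q}\tau}$, which can be useful if one later wants quantitative bounds or a direct interpretation of $\psi_i$ in terms of the claim process. One small inaccuracy in your aside: only the $n$-th row of $-\hat{Q}$ has strictly positive row sum (the others sum to zero), but this does not affect your argument.
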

The matrix $e^{\hat{Q}(t-T)}$  can be seen as the transition matrix of a Markov chain with killing.
The detailed proof in the Appendix \ref{appendixA}.

\begin{lem}
The function $v(t,x,s,i)$ is concave in $x$.
\end{lem}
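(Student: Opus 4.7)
The plan is to leverage the exponential structure of the ansatz \eqref{1123019456de} to reduce concavity in $x$ to a sign condition on the auxiliary functions $\psi_i(t)$. Direct differentiation, as recorded in \eqref{dsid2313901}, gives $v_{xx}(t,x,s,i) = m^2 v(t,x,s,i)$, so since $m^2>0$, establishing $v_{xx}\leq 0$ is equivalent to establishing $v\leq 0$. Because the exponential factor in \eqref{1123019456de} is strictly positive and the prefactor $-1/m$ is negative, this in turn reduces to showing $\psi_i(t)\geq 0$ for every $i\in\{1,\dots,n\}$ and every $t\in[0,T]$.

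To verify this sign condition, I would pass to the companion functions $\varphi_i(t)$ introduced earlier in the section, which satisfy $\varphi_i(t)=e^{-L(t)}\psi_i(t)$. Since $e^{L(t)}>0$, the non-negativity of $\psi_i$ is equivalent to that of $\varphi_i$, and the latter is amenable to the closed-form representation \eqref{4.tterw1516576}, which expresses $\varphi_i(t)$ as a linear combination of the strictly positive terminal values $e^{-L(T)}$ with weights given by the entries $[e^{\hat{Q}(t-T)}]_{ij}$.

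I would then invoke Lemma \ref{e2daw21212}, which guarantees that every entry of $e^{\hat{Q}(t-T)}$ is non-negative on $[0,T]$. Combining the three observations above yields $\varphi_i(t)\geq 0$, hence $\psi_i(t)\geq 0$, hence $v(t,x,s,i)\leq 0$, and therefore $v_{xx}(t,x,s,i)=m^2 v(t,x,s,i)\leq 0$, which is the desired concavity in $x$. Strict concavity would follow whenever $\varphi_i(t)>0$, which is natural from the Markov-chain-with-killing interpretation underlying Lemma \ref{e2daw21212}, as the survival probability over the finite horizon $[t,T]$ remains strictly positive. The substantive analytic content has already been absorbed into the preceding lemma on non-negativity of $e^{\hat{Q}(t-T)}$; beyond recording this chain of implications no further obstacle remains.
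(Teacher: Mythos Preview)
Your proposal is correct and follows essentially the same approach as the paper: the paper's own proof simply remarks that concavity of $v(t,x,s,i)$ is equivalent to non-negativity of $\{\varphi_i(t)\}_{i=1}^n$, which in turn follows from Lemma~\ref{e2daw21212}. Your write-up spells out the intermediate reductions (via $v_{xx}=m^2 v$, the sign of $-1/m$, and the relation $\psi_i=e^{L(t)}\varphi_i$) more explicitly, but the logical route and the key invocation of Lemma~\ref{e2daw21212} are identical.
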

\begin{proof}
By the above argument, we can notice that the concavity of  $v(t,x,s,i)$ is equivalent to the non-negativity of $\{\varphi_i(t)\}_{i=1}^n$,  which can be proven by applying Lemma \ref{e2daw21212}.
\end{proof}

At this point, we obtain a continuously differentiable concave solution to the HJB equation:
\begin{align}\label{1123019456de67w2}
\begin{split}
v(t,x,s,i)=-\frac{1}{m}\exp\left\{-m x+\frac{\mu^2}{2\sigma^2} (t-T)s^{-2\beta}\right\}\psi_i(t), i=1,2,\ldots, n,
\end{split}
\end{align}
where $\psi_i(t)=\varphi_i(t)e^{L(t)}$ and $\varphi_i(t)$ is given by \eqref{4.tterw1516576}.

Now we provide the verification theorem:
\begin{thm}\label{475sss}(verification theorem) Denote $
\Gamma :=\frac{4 \mu^{2}+12 \mu^{3} \beta T+8 \mu^{4} \beta^{2} T^{2}}{\sigma^{2}},$ if either one of the following conditions is satisfied:
\begin{enumerate}\label{r=0condition}
    \item$\Gamma\leq \frac{\mu^2}{2\sigma^2}$,
    \item $\Gamma>\frac{\mu^2}{2\sigma^2} $, and $ T<\gamma_2^{-1}\operatorname{arccot}(\frac{-\gamma_1}{\gamma_2})$, where $\gamma_1=\mu \beta, \gamma_2=\frac{\sqrt{-4\mu^2\beta^2+8\beta^2 \sigma^2\Gamma}}{2},$

\end{enumerate}
 then the value function  $V(t,x,s,i)=v(t,x,s,i),$ where
 $v$ is shown in \eqref{1123019456de67w2}. The optimal investment policy is \begin{equation*}\label{r=0a*}
  a^{*}_t=\frac{\mu +\mu^2\beta(T-t)}{\sigma^{2} s^{2 \beta} m}.
\end{equation*}
\end{thm}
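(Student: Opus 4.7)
The plan is a classical verification argument for the HJB system of a jump-diffusion control problem. By the construction in Section~\ref{ere2121}, the candidate $v$ defined in \eqref{1123019456de67w2} is $C^{1,2}$ in $(t,x,s)$ on $[0,T)\times\mathbb{R}\times(0,\infty)$ for every phase $i$, solves the HJB system \eqref{hjb0180786}, and matches the boundary condition \eqref{hjb02}. Moreover, the pointwise maximiser \eqref{23u2i2323}, after substituting the derivatives \eqref{dsid2313901}, reduces to the feedback $a^{*}(t,s)=(\mu+\mu^2\beta(T-t))/(\sigma^2 s^{2\beta}m)$, which depends on neither $x$ nor $i$ and coincides with the formula in the theorem statement.

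For an arbitrary admissible $a\in\mathscr{U}_{ad}$, I would apply the It\^o--Dynkin formula to $v(u,X_u^a,S_u,J_u)$ on the localised interval $[t,T\wedge\tau_k]$, splitting the differential into the drift, the Brownian martingale, and the two compensated jump martingales (one carrying the phase transitions of $J_u$, the other the claim jumps). Using the HJB \emph{inequality} (the supremum in \eqref{hjb0180786} dominates the bracket evaluated at any fixed $a$), the drift becomes non-positive, so taking conditional expectations, letting $k\to\infty$, and invoking the terminal condition \eqref{hjb02} would yield $v(t,x,s,i)\ge J(t,x,s,i;a)$. Substituting $a^{*}$ turns the pointwise supremum into an equality and the same chain of reasoning produces $v(t,x,s,i)=J(t,x,s,i;a^{*})$, provided $a^{*}\in\mathscr{U}_{ad}$.

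The main obstacle is the analytical control of the stochastic integrals: the Brownian and compensated Poisson terms are a priori only local martingales, so both removing the localisation by dominated convergence and showing that their expectations vanish require uniform moment bounds on $v(u,X_u^a,S_u,J_u)$ for $u\in[t,T]$. Because $|v|$ is a constant multiple of $\exp\{-mx+\tfrac{\mu^2}{2\sigma^2}(u-T)s^{-2\beta}\}\psi_i(u)$, these estimates reduce to exponential moments of $X_u^a$ (the standing hypothesis $\mathbb{E}[e^{mYe^{rT}}]<\infty$ absorbs the claim contribution) together with negative power moments of the CEV process $S_u$. The constant $\Gamma$ is engineered so that the Riccati-type ODE governing $u\mapsto\mathbb{E}[\exp\{\kappa\int_0^u S_r^{-2\beta}dr\}]$ does not explode before $T$: condition~(1) guarantees global non-explosion, while condition~(2), $T<\gamma_2^{-1}\operatorname{arccot}(-\gamma_1/\gamma_2)$, is precisely the first explosion time of this ODE in the regime $\Gamma>\mu^2/(2\sigma^2)$. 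Under either hypothesis one obtains $\sup_{u\in[t,T]}\mathbb{E}[|v(u,X_u^a,S_u,J_u)|^2]<\infty$ uniformly in $a$, which is exactly what is needed to close the supermartingale/martingale argument.

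It remains to verify $a^{*}\in\mathscr{U}_{ad}$: $\mathscr{F}_u$-progressive measurability is automatic because $a^{*}_u$ is a deterministic function of $(u,S_u)$; unique strong solvability of \eqref{sa1wqw} driven by this feedback follows from the linear $X$-drift and the locally Lipschitz diffusion coefficient on $(0,\infty)$, a set which the CEV process never leaves; and the integrability requirement $\mathbb{E}[\int_0^T(a^{*}_u)^2 S_u^{2\beta}du]<\infty$ collapses to $\mathbb{E}[\int_0^T S_u^{-2\beta}du]<\infty$, which is delivered by the same CEV moment estimates used to bound the local martingales above. Assembling these pieces completes the verification and identifies $v$ with the value function.
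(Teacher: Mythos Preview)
Your overall skeleton---apply It\^o with localisation, use the HJB inequality for the supermartingale direction and the HJB equality at $a^*$ for the martingale direction, then remove the localisation---matches the paper's Appendix~\ref{appendixB}. The identification of the $\Gamma$ conditions with non-explosion of the CEV exponential moment (via the results of \cite{zeng2013}) is also exactly what the paper invokes.

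There is, however, a genuine gap in how you remove the localisation for an \emph{arbitrary} admissible $a$. You assert that the hypotheses give $\sup_{u\in[t,T]}\mathbb{E}\bigl[|v(u,X_u^a,S_u,J_u)|^2\bigr]<\infty$ \emph{uniformly in} $a\in\mathscr{U}_{ad}$, and then appeal to dominated convergence. This uniform bound is not available: admissibility only requires $\mathbb{E}\bigl[\int_0^{\infty} a_t^2 S_t^{2\beta}\,dt\bigr]<\infty$, which places no control on $\mathbb{E}[e^{-2mX_u^a}]$; for instance, a large constant short position on a short interval is admissible but makes this expectation arbitrarily large. So dominated convergence cannot be justified for general $a$, and your argument for the inequality $v\ge J(\,\cdot\,;a)$ does not close as written.

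The paper sidesteps this entirely. For generic $a$ it uses only that $v\le 0$ (equivalently $-v\ge 0$): after localisation one has $\mathbb{E}[v(\tau_n\wedge T,\ldots)]\le v(t,x,s,j)$ because on $[t,\tau_n]$ the integrands are bounded and the stochastic integrals are true martingales; then Fatou's lemma is enough to pass to the limit $n\to\infty$ and obtain $\mathbb{E}[U(X_T^a)]\le v(t,x,s,j)$. The $L^2$ bound, and hence the $\Gamma$ conditions, is invoked \emph{only} for the specific strategy $a^*$, where the explicit form $a_t^*=(\mu+\mu^2\beta(T-t))/(\sigma^2 S_t^{2\beta}m)$ lets one compute $\exp\{-2mX_t^*\}$ and split it into a bounded exponential of $\int_0^t S_u^{-2\beta}\,du$ (controlled by Theorem~5.1 of \cite{zeng2013} under condition~(1) or~(2)) times an exponential martingale (Lemma~4.3 of \cite{zeng2013}). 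This yields uniform integrability of $v(\tau_n\wedge T,X^*_{\tau_n\wedge T},\ldots)$ and hence the equality direction. Your sketch of the $a^*$-admissibility check is fine and, if anything, more explicit than the paper's.
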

The detailed proof is in the Appendix \ref{appendixB}.

\section{When the interest rate is  not 0}\label{intere2}
When the interest rate is not $0$, the HJB equation is
\begin{align}\label{hjb01001}
\begin{cases}
&v_t(t,x,s,i)+\sup_{a\in\mathbb{R}}\big\{v_x(t,x,s,i)(c+a(\mu-r)+rx)+\frac{1}{2}\sigma^2a^2s^{2\beta}v_{xx}(t,x,s,i)\\
&\quad +\sigma^2as^{2\beta+1}v_{xs}(t,x,s,i)\big\}
+\mu sv_s(t,x,s,i)+\frac{1}{2}\sigma^2s^{2\beta+2}v_{ss}(t,x,s,i)\\
&\quad+\lambda_i(v(t,x,s,i+1)-v(t,x,s,i))=0,\quad i=1,2,\cdots, n-1;\\
&v_t(t,x,s,i)+\sup_{a\in\mathbb{R}}\big\{v_x(t,x,s,i)(c+a(\mu-r)+rx)+\frac{1}{2}\sigma^2a^2s^{2\beta}v_{xx}(t,x,s,i)\\
&\quad+\sigma^2as^{2\beta+1}v_{xs}(t,x,s,i)\big\}+\mu sv_s(t,x,s,n)+\frac{1}{2}\sigma^2s^{2\beta+2}v_{ss}(t,x,s,i)\\
&\quad+\lambda_n(\mathbb{E}[v(t,x-Y,s,1)]-v(t,x,s,i))=0, \quad i=n,
\end{cases}
\end{align}
with boundary condition
\begin{align}\label{hjb02001}
v(T,x,s,i)=-\frac{1}{m} e^{-mx},\quad i=1,2,\ldots, n.
\end{align}
If there exists a  concave  solution to \eqref{hjb01001},  the maximizer of \eqref{hjb01001} is given by
\begin{align}\label{657657}
a^*(t,x,s,i)=-\frac{(\mu-r)
v_x(t,x,s,i)+\sigma^2s^{2\beta+1}v_{xs}(t,x,s,i)}{\sigma^2s^{2\beta}v_{xx}(t,x,s,i)}, i=1,2,\cdots,n.
\end{align}
Similar to Section \ref{ere2121}, we look for a continuously differentiable concave solution to the HJB equation
\eqref{hjb01001}-\eqref{hjb02001}. 
We take an {\it anstaz}:
\begin{align}\label{112301}
\begin{split}
v(t,x,s,i)=-\frac{1}{m}\exp\left\{-m x e^{r(T-t)}-\frac{(\mu - r)^2}{4\sigma^2\beta r}[1-e^{2\beta r(t-T)}]s^{-2\beta}\right\}\psi_i(t),i=1,2,\ldots, n.
\end{split}
\end{align}
where $\psi_i(t), i=1,2,\cdots, n$, are some deterministic function which will be determined later.
By taking derivatives, we have
\begin{align}\label{sderive1}
\begin{cases}
v_t(t,x,s,i)=v(t,x,s,1)\left(mrx e^{r(T-t)}+\frac{(\mu - r)^2}{2\sigma^2}e^{2\beta r(t-T)}s^{-2\beta}\right)
\\ \qquad\qquad\quad-\frac{1}{m}\exp\left\{-m x e^{r(T-t)}-\frac{(\mu - r)^2}{4\sigma^2\beta r}[1-e^{2\beta r(t-T)}]s^{-2\beta}\right\}\psi_i'(t),\\
v_{x}(t,x,s,i)=v(t,x,s,i)\left\{-m e^{r(T-t)}\right\},
v_{xx}(t,x,s,i)=v(t,x,s,i)\left\{-m e^{r(T-t)}\right\}^2,\\
v_{s}(t,x,s,i)=v(t,x,s,i)\left\{ \frac{(\mu - r)^2}{2\sigma^2r}[1-e^{2\beta r(t-T)}]s^{-2\beta-1}\right\},\\
  v_{ss}(t,x,s,i)=v(t,x,s,i)\left\{ \frac{(\mu - r)^4}{4\sigma^4r^2}[1-e^{2\beta r(t-T)}]^2s^{-4\beta-2}\right\}
\\ \qquad\qquad\quad\quad+ v(t,x,s,i)\left\{-(2\beta+1) \frac{(\mu - r)^2}{2\sigma^2 r}[1-e^{2\beta r(t-T)}]s^{-2\beta-2}\right\},\\
  v_{xs}(t,x,s,i)=v(t,x,s,i)\left\{-m e^{r(T-t)} \frac{(\mu - r)^2}{2\sigma^2 r}[1-e^{2\beta r(t-T)}]s^{-2\beta-1}\right\}.
\end{cases}
\end{align}
Substituting \eqref{657657} and \eqref{sderive1} into \eqref{112301}, we obtain the equations that $\{\psi_i\}_{i=1}^n$ should satisfy
\begin{align}\label{pppp01}
\begin{cases}
\psi_1'(t)-(cm e^{r(T-t)}+ \frac{(2\beta+1)(\mu - r)^2}{4 r}[1-e^{2\beta r(t-T)}]+\lambda_1)\psi_1(t)+\lambda_1 \psi_2(t)=0,\\
\psi_2'(t)-(cm e^{r(T-t)}+ \frac{(2\beta+1)(\mu - r)^2}{4 r}[1-e^{2\beta r(t-T)}]+\lambda_2)\psi_2(t)+\lambda_2 \psi_3(t)=0,\\
\cdots\\
\psi_{n-1}'(t)-(cm e^{r(T-t)}+ \frac{(2\beta+1)(\mu - r)^2}{4 r}[1-e^{2\beta r(t-T)}]+\lambda_{n-1})\psi_{n-1}(t)+\lambda_{n-1} \psi_n(t)=0,\\
\psi_n'(t)-(cm e^{r(T-t)}+ \frac{(2\beta+1)(\mu - r)^2}{4 r}[1-e^{2\beta r(t-T)}]+\lambda_n)\psi_n(t)+\lambda_n\psi_1(t)\mathbb{E}( e^{mYe^{r(T-t)}})=0.
\end{cases}
\end{align}
The boundary condition of $\{\psi_i\}_{i=1}^n$ is
\begin{align}\label{turywew1}
\psi_i(T)=1,i=1,2, \cdots, n.
\end{align}
We need to solve for a  continuously differentiable and non-negative solution to \eqref{pppp01}-\eqref{turywew1}. The non-negativity of the solution is to make sure that the solution of \eqref{112301} is concave.  We apply change of variables to simplify \eqref{pppp01}.

Define
\begin{equation}
F(t)=\frac{c m}{r} e^{r T}\left(1-e^{-r t}\right)+\frac{(2 \beta+1)(\mu-r)^{2}}{4 r} t+\frac{(2 \beta+1)(\mu-r)^{2}}{8 \beta r^{2}} e^{-2 \beta r T}\left(1-e^{2 \beta r t}\right).
\end{equation}
Multiplying $e^{-F(t)}$ on both sides of \eqref{pppp01} gives
\begin{align}\label{4.9}
\begin{cases}
e^{-F(t)}\psi_1'(t)-e^{-F(t)}\left(cm e^{r(T-t)}+ \frac{(2\beta+1)(\mu - r)^2}{4 r}[1-e^{2\beta r(t-T)}]+\lambda_1\right)\psi_1(t)\\
\quad\quad\quad\quad\quad+\lambda_1 \psi_2(t)e^{-F(t)}=0,\\
e^{-F(t)}\psi_2'(t)-e^{-F(t)}\left(cm e^{r(T-t)}+ \frac{(2\beta+1)(\mu - r)^2}{4 r}[1-e^{2\beta r(t-T)}]+\lambda_2\right)\psi_2(t)\\
\quad\quad\quad\quad\quad+\lambda_2 \psi_3(t)e^{-F(t)}=0,\\
\ldots\\
e^{-F(t)}\psi_{n-1}'(t)-e^{-F(t)}\left(cm e^{r(T-t)}+ \frac{(2\beta+1)(\mu - r)^2}{4 r}[1-e^{2\beta r(t-T)}]+\lambda_{n-1}\right)\psi_{n-1}(t)\\
\quad\quad\quad\quad\quad\quad+\lambda_{n-1} \psi_n(t)e^{-F(t)}=0,\\
e^{-F(t)}\psi_n'(t)-e^{-F(t)}\left(cm e^{r(T-t)}+ \frac{(2\beta+1)(\mu - r)^2}{4 r}[1-e^{2\beta r(t-T)}]+\lambda_n\right)\psi_n(t)\\
\quad\quad\quad\quad\quad+\lambda_n\psi_1(t)\mathbb{E}( e^{mYe^{r(T-t)}})e^{-F(t)}=0.
\end{cases}
\end{align}
Denote
\begin{align*}\label{434352311}
\varphi_i(t):=e^{-F(t)}\psi_i(t), i=1,2,\ldots, n,
\end{align*}
\eqref{4.9} turns to be
\begin{align}
\begin{cases}
\varphi_1'(t)-\lambda_1\varphi_1(t)+\lambda_1 \varphi_2(t)=0,\\
\varphi_2'(t)-\lambda_2\varphi_2(t)+\lambda_2\varphi_3(t)=0,\\
\cdots\\
\varphi_{n-1}'(t)-\lambda_{n-1}\varphi_{n-1}(t)+\lambda_{n-1}\varphi_n(t)=0,\\
\varphi_n'(t)-\lambda_n \varphi_n(t)+\lambda_n \mathbb{E}(e^{mY e^{r(T-t)}})\varphi_1(t)=0,
\end{cases}
\end{align}
or equivalently,
\begin{align}\label{78e0e03}
\left(
  \begin{array}{c}
    \varphi_1'(t) \\
    \varphi_2'(t) \\
    \vdots\\
    \varphi_{n-1}'(t) \\
    \varphi_n'(t) \\
  \end{array}
\right)=Q(t)\left(
                 \begin{array}{c}
                   \varphi_1(t) \\
                   \varphi_2(t) \\
                   \vdots \\
                 \varphi_{n-1}(t) \\
                 \varphi_n(t) \\
                 \end{array}
               \right),
\end{align}
where
\begin{align*}
Q(t)=\left(
       \begin{array}{cccccc}
         \lambda_1 & -\lambda_1 & 0 & \cdots & 0 & 0 \\
          0&  \lambda_2& -\lambda_2 &\cdots  &0 & 0 \\
          0&  0&\lambda_3  & \cdots & 0 & 0 \\
          \vdots& \vdots & \vdots & \vdots & \vdots & \vdots \\
          0& 0 & 0 & \cdots & \lambda_{n-1} &-\lambda_{n-1}  \\
        -\lambda_n z(t) &0  &0  &\cdots  & 0 & \lambda_n \\
       \end{array}
     \right)
\end{align*}
is a $n\times n$ matrix and $z(t):=\mathbb{E}(e^{mY e^{r(T-t)}})$. We assume that $z(t)<+\infty$ for all $t\in[0,T]$.
The boundary condition is
\begin{align} \label{7878232}
\varphi_i(T)=e^{-F(T)}>0.
\end{align}

Note that the Cayley--Hamilton theorem is not applicable to prove the non-negativity of each $\varphi_i$ due to the non-exchangeability of $Q$ for different time:
for any $u,t\in[0,T], u \neq t$,
\begin{equation}
Q(t)Q(u)\neq Q(u)Q(t).
\end{equation}
We resort to the fixed point argument to show the existence of the non-negative solutions to \eqref{78e0e03}.

Denote $\bar{z}:=\sup_{s\in[0,T]}|z(s)|$ and $\bar{\lambda}:=\max\{\lambda_1,\cdots,\lambda_n\}$.
For a given $\delta \leq \frac{1}{2\bar{z} \bar{\lambda}}$, we divide the time interval $[0,T]$ into the $N$ subintervals $[0,T-(N-1)\delta],\cdots,[T-k\delta,T-(k-1)\delta]$, such that $T\leq N\delta$. Now consider the system \eqref{78e0e03} in the time interval $[T-k\delta,T-(k-1)\delta]$:
\begin{align}\label{78e0e03.2}
	\left\{\begin{array}{ll}
		\varphi_1'(t)-\lambda_1\varphi_1(t)+\lambda_1 \varphi_2(t)=0,&\\
		\varphi_2'(t)-\lambda_2\varphi_2(t)+\lambda_2\varphi_3(t)=0,&\\
		\cdots  &\quad t \in [T-k\delta,T-(k-1)\delta],\\
		\varphi_{n-1}'(t)-\lambda_{n-1}\varphi_{n-1}(t)+\lambda_{n-1}\varphi_n(t)=0,&\\
		\varphi_n'(t)-\lambda_n \varphi_n(t)+\lambda_n z(t) \varphi_1(t)=0.&
	\end{array}\right.
\end{align}
Consider the non-negative valued continuous function space $C([T-k\delta,T-(k-1)\delta];\mathbb{R}_{+})$ equipped with the supremum norm $\|v\|_{\infty} := \sup_{s\in[T-k\delta,T-(k-1)\delta]}|v(s)|$ which is a Banach space.
If $k=1$, then the boundary condition is $\varphi_i(T-(k-1)\delta)=e^{-F(T)}>0$. If $k \neq 1$, we assume that the same system defined in the former time interval $[T-(k-1)\delta,T-(k-2)\delta]$ admits a unique non-negative solution $\bar{\varphi}_i(t) \in C([T-(k-1)\delta,T-(k-2)\delta];\mathbb{R}_{\geq0})$, then let $\varphi_i(T-(k-1)\delta):=\bar{\varphi}_i(T-(k-1)\delta)$.

\begin{lem}
	 For any $k=1,\cdots,N$, the equation system \eqref{78e0e03.2} admits a unique solution $(\varphi_1,\cdots,\varphi_n)$ such that $\varphi_i(t) \in C^1([T-k\delta,T-(k-1)\delta];\mathbb{R}_{+})$ for all $i = 1,2,\cdots,n$.
\end{lem}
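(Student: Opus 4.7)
The plan is to recast the system (\ref{78e0e03.2}) as a scalar Picard-type fixed-point problem for the component $\varphi_1$, rather than working directly with the vector-valued integral operator $\vec\varphi\mapsto \vec c-\int_t^{\tau}Q(s)\vec\varphi(s)\,ds$. The reason to reduce to a single component is that the naive vector formulation does not visibly preserve the non-negative cone, since $-Q(s)$ has negative entries, whereas unrolling the cyclic dependence one equation at a time makes positivity manifest at every step.

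Set $\tau:=T-(k-1)\delta$ and fix the terminal data $c_i:=\varphi_i(\tau)>0$ (taking $c_i=e^{-F(T)}$ when $k=1$, or $c_i=\bar\varphi_i(\tau)$ inherited from the solution on the adjacent subinterval, in the induction on $k$). Integrating each linear ODE backward in time with integrating factor $e^{-\lambda_i t}$ yields
\begin{align*}
\varphi_i(t)&=e^{\lambda_i(t-\tau)}c_i+\lambda_i\int_t^{\tau}e^{\lambda_i(t-s)}\varphi_{i+1}(s)\,ds,\quad i=1,\ldots,n-1,\\
\varphi_n(t)&=e^{\lambda_n(t-\tau)}c_n+\lambda_n\int_t^{\tau}e^{\lambda_n(t-s)}z(s)\varphi_1(s)\,ds.
\end{align*}
Given $\varphi_1\in X_+:=C([T-k\delta,\tau];\mathbb{R}_+)$, the second display produces $\varphi_n\in X_+$, the first display (applied recursively for $i=n-1,\ldots,2$) produces $\varphi_{n-1},\ldots,\varphi_2\in X_+$, and the $i=1$ version then defines $\mathcal{T}(\varphi_1)(t):=e^{\lambda_1(t-\tau)}c_1+\lambda_1\int_t^{\tau}e^{\lambda_1(t-s)}\varphi_2(s)\,ds$. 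Because every $c_i>0$, every exponential kernel is positive, and $z(s)\ge 1$, the output $\mathcal{T}(\varphi_1)$ stays in $X_+$, and any fixed point of $\mathcal{T}$ solves the full system.

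The contraction estimate uses $e^{\lambda_i(t-s)}\le 1$ on $\{t\le s\}$ to obtain
\begin{equation*}
\|\varphi_n-\tilde\varphi_n\|_\infty\le \lambda_n\bar z\,\delta\,\|\varphi_1-\tilde\varphi_1\|_\infty,\qquad \|\varphi_i-\tilde\varphi_i\|_\infty\le \lambda_i\delta\,\|\varphi_{i+1}-\tilde\varphi_{i+1}\|_\infty\ (2\le i\le n-1),
\end{equation*}
which telescope to $\|\mathcal{T}(\varphi_1)-\mathcal{T}(\tilde\varphi_1)\|_\infty\le \bar\lambda^n\bar z\,\delta^n\|\varphi_1-\tilde\varphi_1\|_\infty$. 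Using the standing bound $\delta\le (2\bar z\bar\lambda)^{-1}$ together with $\bar z=\sup_t\mathbb{E}[e^{mYe^{r(T-t)}}]\ge 1$, the Lipschitz constant is at most $1/(2^n\bar z^{n-1})\le 2^{-n}<1$. Banach's contraction principle applied on the closed subset $X_+$ of the Banach space $C([T-k\delta,\tau];\mathbb{R})$ yields a unique fixed point $\varphi_1^*\in X_+$; the recursion produces unique companions $\varphi_2^*,\ldots,\varphi_n^*\in X_+$, and the lower bound $\varphi_i^*(t)\ge e^{\lambda_i(t-\tau)}c_i>0$ upgrades non-negativity to strict positivity (so the induction on $k$ closes). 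Each $\varphi_i^*$ is continuous, so from (\ref{78e0e03.2}) the derivative $\varphi_i^{*\,\prime}$ is continuous, giving $\varphi_i^*\in C^1$.

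The main obstacle I anticipate is the non-negativity claim, as the contraction itself is routine once $\delta$ is small. The resolution is exactly the sequential unrolling of the cyclic coupling $1\to n\to n-1\to\cdots\to 2\to 1$: expressing each $\varphi_i$ as a non-negative boundary term plus a positive integral of $\varphi_{i+1}$ transports positivity around the cycle without appealing to any sign-preservation property of $Q(s)$, and the scalar contraction in $\varphi_1$ closes the cycle consistently.
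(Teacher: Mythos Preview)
Your proof is correct and follows essentially the same route as the paper: decouple the cyclic system by treating $\varphi_1$ as an input to the last equation, solve backward from $\varphi_n$ to $\varphi_1$ via the integral representations to propagate positivity, and close the loop with a Banach fixed-point argument on $C([T-k\delta,\tau];\mathbb{R}_+)$. Your Lipschitz constant $\bar\lambda^n\bar z\,\delta^n$ is in fact sharper than the paper's $\bar\lambda^n\bar z^n\delta^n$ (the paper carries an unnecessary $\bar z$ through each of the $n-1$ equations that do not involve $z$), but both are bounded by $2^{-n}$ under the hypothesis $\delta\le(2\bar z\bar\lambda)^{-1}$.
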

\begin{proof}
	We decouple the system \eqref{78e0e03.2} by constructing a map $\Phi: C([T-k\delta,T-(k-1)\delta];\mathbb{R}_{+}) \ni \hat{\varphi}_1 \longmapsto \varphi_1 \in C([T-k\delta,T-(k-1)\delta];\mathbb{R})$ as follows
	\begin{align}\label{78e0e03.3}
		\begin{cases}
			\varphi_1'(t)-\lambda_1\varphi_1(t)+\lambda_1 \varphi_2(t)=0,\\
			\varphi_2'(t)-\lambda_2\varphi_2(t)+\lambda_2 \varphi_3(t)=0,\\
			\cdots\\
			\varphi_{n-1}'(t)-\lambda_{n-1}\varphi_{n-1}(t)+\lambda_{n-1}\varphi_n(t)=0,\\
			\varphi_n'(t)-\lambda_n \varphi_n(t)+\lambda_n z(t)\hat{\varphi}_1(t)=0,
		\end{cases}
	\end{align}
	Since $\hat{\varphi}_1(t)$ and $z(t)$ are non-negative functions, it is easy to show that
	\begin{equation}
		\label{solution.Cn}
		\varphi_n(t) = e^{-\lambda_n(T-(k-1)\delta-t)}\bar{\varphi}_n(T-(k-1)\delta) + \lambda_n \int_t^{T-(k-1)\delta} e^{-\lambda_n(s-t)} z(s)\hat{\varphi}_1(s) ds > 0.
	\end{equation}
	Hence $\varphi_n(t)$ belongs to $C([T-k\delta,T-(k-1)\delta];\mathbb{R}_{+})$. We conduct this procedure to the equation of $\varphi_{n-1},\cdots,\varphi_1$ one-by-one. It can be shown that the system of \eqref{78e0e03.3} admits a unique solution $(\varphi_1,\cdots,\varphi_n)$ and all the entries belong to $C([T-k\delta,T-(k-1)\delta];\mathbb{R}_{+})$. Therefore $\Phi$ is a self-map.
	
	If we consider another input $\hat{\varphi}^\varepsilon_1(t)$ to the system,
	\begin{align}\label{78e0e03.4}
		\begin{cases}
			(\varphi^\varepsilon_1)'(t)-\lambda_1\varphi^\varepsilon_1(t)+\lambda_1 \varphi^\varepsilon_2(t)=0,\\
			(\varphi^\varepsilon_2)'(t)-\lambda_2\varphi^\varepsilon_2(t)+\lambda_2\varphi^\varepsilon_3(t)=0,\\
			\cdots\\
			(\varphi^\varepsilon_{n-1})'(t)-\lambda_{n-1}\varphi^\varepsilon_{n-1}(t)+\lambda_{n-1}\varphi^\varepsilon_n(t)=0,\\
			(\varphi^\varepsilon_n)'(t)-\lambda_n \varphi^\varepsilon_n(t)+\lambda_n z(t)\hat{\varphi}^\varepsilon_1(t)=0.
		\end{cases}
	\end{align}
	The last equation can be written as
	\begin{equation}
		\label{solution.Cnepsilon}
		\varphi^\varepsilon_n(t) = e^{-\lambda_n(T-t)}\bar{\varphi}_n(T-(k-1)\delta) + \lambda_n \int_t^{T-(k-1)\delta} e^{-\lambda_n(s-t)} z(s)\hat{\varphi}^\varepsilon_1(s) ds.
	\end{equation}
	We take the difference of \eqref{solution.Cn} and \eqref{solution.Cnepsilon},
	\begin{align*}
		\varphi^\varepsilon_n(t) - \varphi_n(t) = \lambda_n \int_t^{T-(k-1)\delta} e^{-\lambda_n(s-t)} z(s)\left(\hat{\varphi}^\varepsilon_1(s)-\hat{\varphi}_1(s)\right) ds
		\leq \delta \bar{\lambda}\bar{z}\|\hat{\varphi}^\varepsilon_1-\hat{\varphi}_1\|_{\infty}.
	\end{align*}
	For the second-to-last equation, we have
	\begin{align*}
		\varphi^\varepsilon_{n-1}(t) - \varphi_{n-1}(t) =& \lambda_{n-1} \int_t^{T-(k-1)\delta} e^{-\lambda_{n-1}(s-t)} z(s)\left(\varphi^\varepsilon_n(t) - \varphi_n(t)\right) ds \\
		\leq& \delta \bar{\lambda}\bar{z} \|\varphi^\varepsilon_n-\varphi_n\|_{\infty}
		\leq \delta^2 \bar{\lambda}^2\bar{z}^2 \|\hat{\varphi}^\varepsilon_1-\hat{\varphi}_1\|_{\infty}.
	\end{align*}
	We take this procedure to other equations in systems \eqref{78e0e03.3} and \eqref{78e0e03.4} to deduce that
	\begin{equation}
		\|\varphi^\varepsilon_1 - \varphi_1 \|_{\infty}\leq \delta^n \bar{\lambda}^n\bar{z}^n \|\hat{\varphi}^\varepsilon_1-\hat{\varphi}_1\|_{\infty} \leq \frac{1}{2^n}\|\hat{\varphi}^\varepsilon_1-\hat{\varphi}_1\|_{\infty}.
	\end{equation}

Hence the map $\Phi$ is a contractive map. By the Banach fixed point theorem, there exists a unique fixed point $\varphi_1 \in C([T-k\delta,T-(k-1)\delta];\mathbb{R}_{+})$ such that $\Phi(\varphi_1) = \varphi_1$. Until now, we proved that the equation system \eqref{78e0e03.2} admits a unique solution $(\varphi_1,\cdots,\varphi_n)$ such that, for any $i = 1,\cdots,n$, $\varphi_i(t) \in C([T-k\delta,T-(k-1)\delta];\mathbb{R}_{+})$. By the continuity of $z(t)$, the system \eqref{78e0e03.2} further shows that each $\varphi_i(t) \in C^1([T-k\delta,T-(k-1)\delta];\mathbb{R}_{+})$.
\end{proof}

\begin{thm}
	The equation system \eqref{78e0e03} admits a unique solution $(\varphi_1,\cdots,\varphi_n)$ such that $\varphi_i(t) \in C^1([0,T];\mathbb{R}_{+})$ for all $i = 1,2,\cdots,n$.
\end{thm}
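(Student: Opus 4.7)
My plan is to obtain the global solution by induction on $k = 1, 2, \ldots, N$, gluing together the local solutions produced by the preceding lemma on each subinterval $[T-k\delta, T-(k-1)\delta]$. Since $T \leq N\delta$, finitely many iterations cover $[0,T]$, and the only non-trivial points to check are (i) that the induction can be carried out (positive terminal data is available at every stage), and (ii) that the concatenated function is $C^1$ across the junction points, not merely continuous.

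The base step $k=1$ is immediate: the lemma applied with the given terminal condition $\varphi_i(T) = e^{-F(T)} > 0$ yields a unique $(\varphi_1, \ldots, \varphi_n)$ with each $\varphi_i \in C^1([T-\delta, T]; \mathbb{R}_+)$. For the inductive step, assume a unique positive $C^1$ solution has been produced on $[T-(k-1)\delta, T-(k-2)\delta]$. Its left-endpoint values are strictly positive, as one sees directly from the integral representation
\[\varphi_n(t) = e^{-\lambda_n(T-(k-2)\delta-t)}\bar{\varphi}_n(T-(k-2)\delta) + \lambda_n \int_t^{T-(k-2)\delta} e^{-\lambda_n(s-t)} z(s)\varphi_1(s)\, ds > 0\]
established in the lemma, together with the analogous backward representations for $\varphi_{n-1}, \ldots, \varphi_1$. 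These strictly positive left-endpoint values serve as the terminal data for the next subinterval $[T-k\delta, T-(k-1)\delta]$, so the lemma applies again and produces a unique positive $C^1$ solution there. After $N$ steps the full interval $[0,T]$ is covered, and uniqueness on each piece yields global uniqueness.

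It remains to verify $C^1$ regularity at each junction $t_k := T-(k-1)\delta$. Continuity is automatic, since the boundary condition was deliberately matched. For the derivative, I invoke the ODE itself: for $i = 1, \ldots, n-1$ we have $\varphi_i'(t) = \lambda_i \varphi_i(t) - \lambda_i \varphi_{i+1}(t)$, and $\varphi_n'(t) = \lambda_n \varphi_n(t) - \lambda_n z(t)\varphi_1(t)$. The right-hand sides are continuous functions of the already-matched $\varphi_j$ and of the continuous coefficient $z(t)$, so the left and right derivatives at $t_k$ coincide. Thus each $\varphi_i$ belongs to $C^1([0,T]; \mathbb{R}_+)$.

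The main obstacle I anticipate is purely a matter of careful bookkeeping at the junction points: checking that the hypotheses of the preceding lemma are satisfied inductively (in particular, that the contraction constant $\delta \bar{\lambda}\bar{z} \leq \tfrac{1}{2}$ holds uniformly on every subinterval, which is ensured by the uniform choice $\delta \leq 1/(2\bar{z}\bar{\lambda})$ and the uniform bound $\bar{z} = \sup_{s\in[0,T]}|z(s)|$), and that strict positivity is preserved at each transition. No deeper machinery is needed, since the lemma already supplies the local existence, uniqueness, positivity, and regularity.
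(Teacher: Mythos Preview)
Your proposal is correct and follows the same approach as the paper: the paper's own proof is the single sentence ``We paste all the solutions in the $N$ subintervals $[T-k\delta,T-(k-1)\delta]$ together to obtain the solution to system \eqref{78e0e03},'' and your argument simply makes explicit the bookkeeping (inductive positivity of the terminal data, uniqueness from piecewise uniqueness, and $C^1$ matching at the junctions via the ODE) that this one-liner leaves implicit.
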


\begin{proof}
	We paste all the solutions in the $N$ subintervals $[T-k\delta,T-(k-1)\delta]$ together to obtain the solution to system \eqref{78e0e03}.
\end{proof}

At this point, we have constructed a continuously differentiable concave solution $v(t,x,s,i)$ to the HJB equation \eqref{hjb01001}-\eqref{hjb02001}.
 When we provide a verification theorem  to show that under proper conditions, the continuously differentiable solution to the HJB equation is indeed the optimal value function defined in \eqref{sszx1223}.
\begin{thm}(verification theorem)\label{2546090}
For the case of interest rate not being 0, denote $\iota:=\frac{4(\mu-r)^2}{\sigma^2}$, if either one of the following conditions is satisfied:
\begin{enumerate}
    \item $ \iota\leq \frac{\mu^2}{2\sigma^2}$,
    \item $ \iota>\frac{\mu^2}{2\sigma^2}$,  and  $T<\gamma_3^{-1}\operatorname{arccot}(\frac{-\gamma_1}{\gamma_3})$, where $\gamma_1=\mu \beta $, $\gamma_3=\frac{\sqrt{-4\mu^2\beta^2+8\beta^2 \sigma^2\iota}}{2}$,
\end{enumerate}
then the optimal value function $V (t,x,s,i) = v(t,x,s,i)$, where v is shown in \eqref{112301}. The optimal investment policy is \begin{align}\label{asasss111}\begin{split}
a^*_t
=&\frac{(\mu-r)+(1-e^{2\beta r(T-t)})\frac{(\mu-r)^2}{2 r} }{\sigma^2 s^{2\beta}me^{r(T-t)}}.
\end{split}
\end{align}
\end{thm}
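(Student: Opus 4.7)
The plan is to mirror the verification argument used for Theorem \ref{475sss} but adapted to the $r\neq 0$ setting, where the ansatz \eqref{112301} carries the extra $e^{r(T-t)}$ and $e^{2\beta r(t-T)}$ factors and the coefficient matrix $Q(t)$ is genuinely time-dependent. The candidate $v$ from \eqref{112301} is $C^{1,2,2}$ in $(t,x,s)$ and classically solves the HJB system \eqref{hjb01001}-\eqref{hjb02001} in each phase $i$ by the preceding existence theorem for $\{\varphi_i\}$ and the fact that $v$ is concave in $x$ (so the pointwise supremum in \eqref{hjb01001} is attained at the feedback \eqref{657657}, which evaluates to $a^*$ in \eqref{asasss111} under the ansatz). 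It remains to show $v=V$ and that $a^*$ is optimal.

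First I would fix an arbitrary $a\in\mathscr{U}_{ad}$ and apply It\^o's formula to $v(t,X_t^{a},S_t,J_t)$ on $[t,T]$, decomposing the result into a Lebesgue drift $\int_t^T \mathcal{L}^{a_u}v\,du$ governed by the full infinitesimal generator (Brownian diffusion plus the phase and compound-renewal jump operators) and a local martingale $M^{a}$ comprising the $dW$-integral together with the compensated phase-jump and claim-jump integrals. Because $v$ satisfies \eqref{hjb01001} and $a_u$ is not required to be the pointwise maximiser, $\mathcal{L}^{a_u}v\le 0$. Assuming the true-martingale property of $M^{a}$ (addressed below), taking expectations yields
\[
v(t,x,s,i)\;\ge\;\mathbb{E}\!\left[v(T,X_T^{a},S_T,J_T)\,\middle|\,X_t=x,S_t=s,J_t=i\right]\;=\;J(t,x,s,i;a),
\]
hence $v\ge V$. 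The reverse inequality follows because $a^{*}$ in \eqref{asasss111} realises the pointwise supremum in \eqref{hjb01001}, so $\mathcal{L}^{a^{*}}v\equiv 0$ and the same It\^o expansion produces equality $v=J(\cdot;a^{*})\le V$, provided $a^{*}\in\mathscr{U}_{ad}$.

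The main obstacle is promoting $M^{a}$ from a local to a genuine martingale and, in parallel, verifying admissibility of $a^{*}$. The two jump components of $M^{a}$ are tame under the standing assumption $\mathbb{E}[e^{mYe^{rT}}]<\infty$ and the finiteness of $z(t)=\mathbb{E}[e^{mYe^{r(T-t)}}]$ on $[0,T]$, since these control the compensated claim integral against $v$, while the phase-jump compensator is bounded. The delicate component is the Brownian integral, whose quadratic variation contains $v_x^{2}\sigma^{2}S_u^{2\beta}a_u^{2}$; in view of \eqref{112301} the exponential prefactor of $v$ produces a factor $\exp\!\bigl(\theta_u\,S_u^{-2\beta}\bigr)$ whose expectation is not a priori finite for arbitrary horizons under CEV dynamics. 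I would introduce localising times $\tau_n:=\inf\{u\ge t:|X_u^{a}|+S_u+S_u^{-1}>n\}\wedge T$ and, using the explicit SDE
\[
d\bigl(S_t^{-2\beta}\bigr)=\bigl[-2\beta\mu\,S_t^{-2\beta}+\beta(2\beta+1)\sigma^{2}\bigr]\,dt-2\beta\sigma\,S_t^{-\beta}\,dW_t,
\]
derive a Riccati ODE for the logarithm of $\mathbb{E}\!\bigl[\exp(\theta S_u^{-2\beta})\bigr]$ via a Feynman-Kac reduction. The two conditions in the statement are precisely what keep this Riccati equation from exploding before $T$: case $\iota\le\mu^{2}/(2\sigma^{2})$ makes its discriminant non-positive and yields a globally bounded solution, while case $\iota>\mu^{2}/(2\sigma^{2})$ produces a trigonometric solution whose first singularity lies at $\gamma_3^{-1}\operatorname{arccot}(-\gamma_1/\gamma_3)$. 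These estimates simultaneously certify $\mathbb{E}\int_t^T(a_u^{*})^{2}S_u^{2\beta}\,du<\infty$ (admissibility of $a^{*}$, since $(a_u^{*})^{2}S_u^{2\beta}$ scales like $S_u^{-2\beta}$) and uniform integrability of $\{M_{T\wedge\tau_n}^{a}\}_{n}$; passing $n\to\infty$ with Fatou's lemma then gives $\mathbb{E}[M_T^{a}]=0$ and completes the argument, exactly paralleling the role of $\Gamma$ in Theorem \ref{475sss} and its appendix proof.
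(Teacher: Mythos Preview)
Your plan tracks the paper's proof in Appendix~\ref{appendixC} closely: localise with stopping times $\tau_n$, apply It\^o to $v$, use that $v$ is a classical solution of \eqref{hjb01001} to get a drift of the correct sign, and justify the passage $n\to\infty$ by controlling exponential moments involving $S^{-2\beta}$ under the CEV dynamics. Your Riccati sketch for $\mathbb{E}\bigl[\exp(\theta S_u^{-2\beta})\bigr]$ is precisely the mechanism inside Theorem~5.1 and Lemma~4.3 of \cite{zeng2013}, which the paper simply cites as black boxes; so on the analytic side you are unpacking what the paper imports.

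One structural point needs correcting. You frame the obstacle as promoting $M^{a}$ to a true martingale for \emph{every} admissible $a$, and claim your moment bounds give uniform integrability of $\{M^{a}_{T\wedge\tau_n}\}_n$ in general. They do not: the Riccati reduction works because $(a_u^{*})^{2}S_u^{2\beta}$ is proportional to $S_u^{-2\beta}$, a feature specific to the explicit feedback $a^{*}$ that a generic $a\in\mathscr{U}_{ad}$ need not share. The paper separates the two directions: for arbitrary $a$ it keeps only the localised inequality $\mathbb{E}[v(\tau_n\wedge T,\cdot)]\le v(t,x,s,j)$ and passes to the limit by Fatou (using $v\le 0$), with no UI required. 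Uniform integrability is invoked \emph{only} for $a=a^{*}$, via the bound $\mathbb{E}[v^{2}(t,X_t^{*},S_t,J_t)]<\infty$: the paper expands $\exp\{-2me^{r(T-t)}X_t^{*}\}$ through the SDE for $X^{*}$, adds and subtracts a quadratic-variation term to write the exponent as $\tilde H_{1t}+\tilde H_{2t}$ with $e^{2\tilde H_{2t}}$ an exponential martingale (Lemma~4.3 of \cite{zeng2013}), applies Cauchy--Schwarz, and bounds $\mathbb{E}[e^{2\tilde H_{1t}}]$ via Theorem~5.1 of \cite{zeng2013}; the two alternatives on $\iota$ are exactly the non-explosion conditions there. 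Also, Fatou yields an inequality, not $\mathbb{E}[M_T^{a}]=0$; the equality for $a^{*}$ comes from uniform integrability, not Fatou. Reorganising your argument along this split removes the gap.
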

Detailed proof is shown in the Appendix \ref{appendixC}.
\section{Sensitivity Analysis}\label{5}
This section explores  sensitivity of the optimal policy and the optimal value function on model parameters. We assume that the interarrival times follow Erlang (2) distribution. Unless otherwise stated, the parameters are shown in the following table.

\begin{table}[htbp]
	\centering

	\begin{tabular}{cccccccccc}
		\toprule  
		$\mu$&$r$&$\beta$&$T$&$\lambda_1$ &$\lambda_2$ & $\sigma$&$m$&$c$&$s$\\
		\midrule  
		0.2&0.18&1&2&0.5&2& 0.3&1&2.5&1\\
		\bottomrule  
	\end{tabular}
\end{table}

\begin{figure}[htbp]
\begin{minipage}[t]{0.5\linewidth}
\centering
\includegraphics[height=5cm]{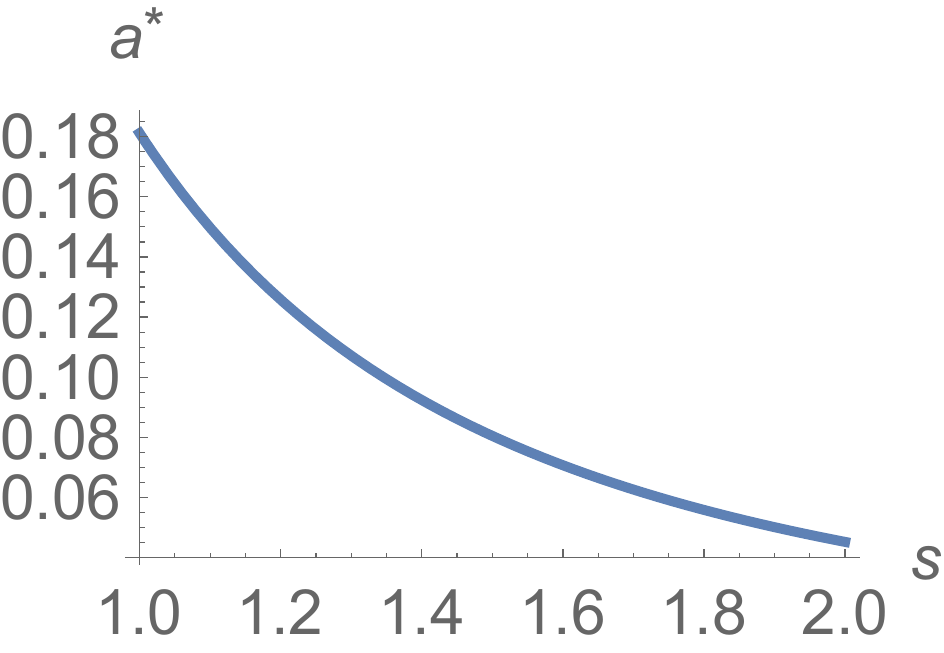}
\caption{The optimal strategy $a^*$ about \\ stock price $s$ at time $t=1.$}
  \label{fig:side:a1}
\end{minipage}%
\begin{minipage}[t]{0.5\linewidth}
\centering
\includegraphics[height=5cm]{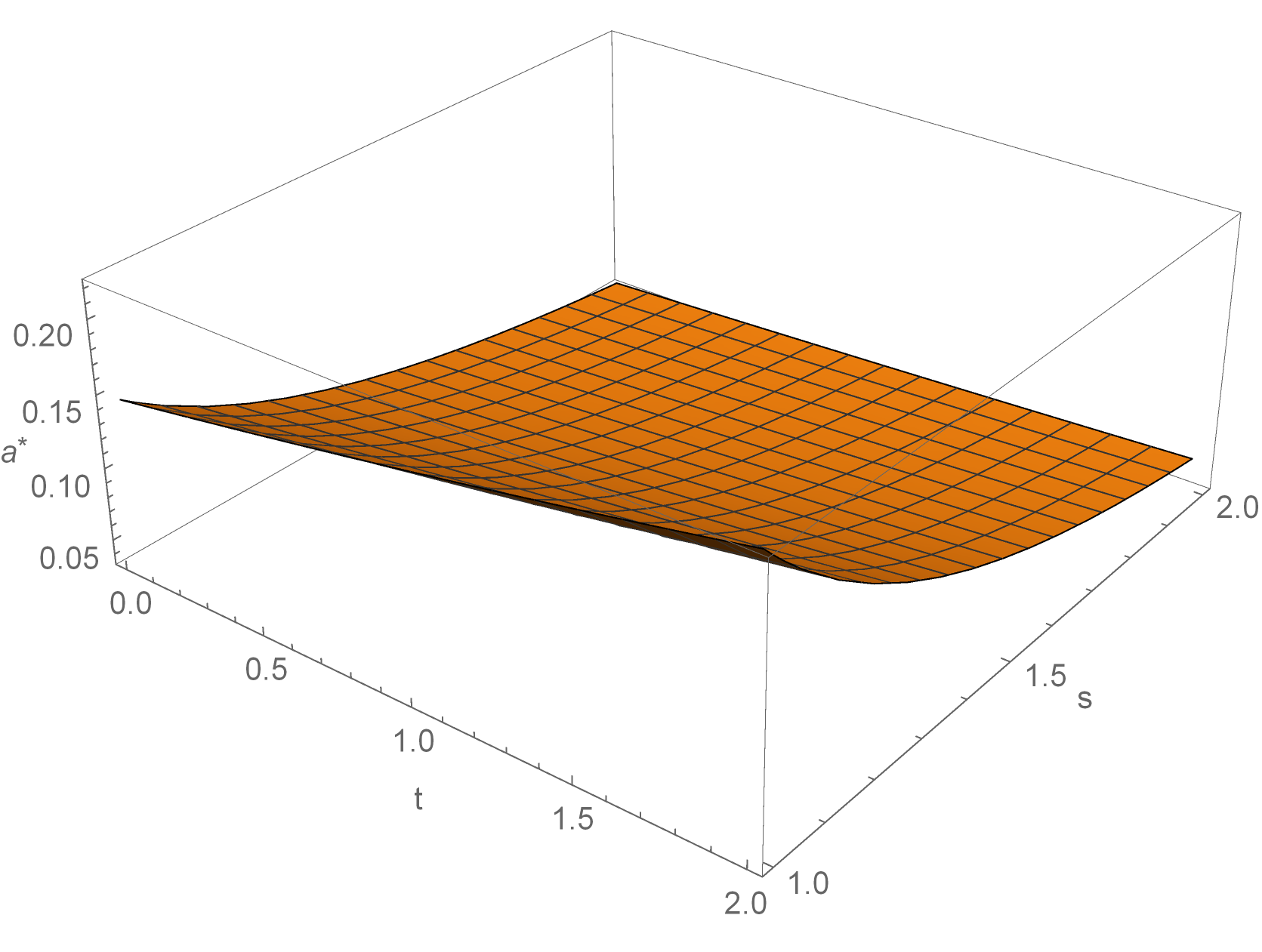}
\caption{The optimal strategy $a^*$ about \\ stock price $s$ and time $t$.}
  \label{fig:side:a2}
\end{minipage}
\end{figure}

From the explicit expression we can see  that the optimal policy is independent of the current surplus $x$, the current phase $i$. The optimal policy depends on  current time $t$, the stock price $s$, the interest rate $r$, the drift $\mu $, and the volatility $\sigma $ of the stock price.
Figure \ref{fig:side:a1} shows the effect of stock process $s$ on the  optimal investment policy $a^*$ when $t=1$. The investment amount on stocks  is decreasing in the  stock price. This phenomenon is in line with intuition since as stock price increasing, the cost of holding stocks will be higher. The optimal investment strategy is a ``buy low, sell high" strategy.
Figure \ref{fig:side:a2} is a 3-dimensional picture showing the effect of time $t$ and stock price $s$ on the optimal investment policy $a^*$, from which we can see that the amount of investment  increases as the time $t$ increases. This  means as $t$ approaches to the terminal time, the insurer tends to take more risk in order to get higher returns.

Next, we  analyze the effects of $s$ and $t$ on the optimal strategy in the case that  the claims follow a uniform and exponential distribution, respectively.
\begin{example}
 When claim size follows the uniform distribution  on $[0,1]$.
 \end{example}
\begin{figure}[ht]
    \centering
    \begin{minipage}[b]{0.45\textwidth}
        \centering
        \includegraphics[width=\textwidth]{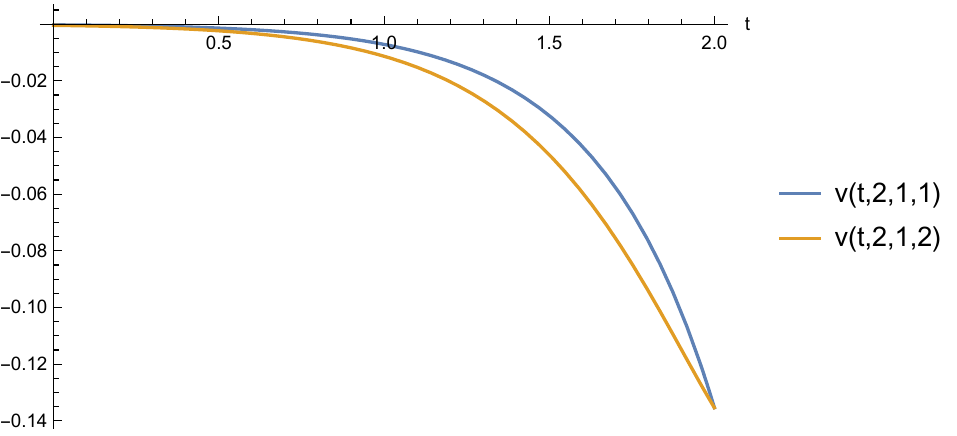}
          \caption{The value function $V$ about time $t$ at $x=2$ and  $s=1$.}
        \label{5748579432a1}
    \end{minipage}
    \quad
    \begin{minipage}[b]{0.45\textwidth}
        \centering
        \includegraphics[width=\textwidth]{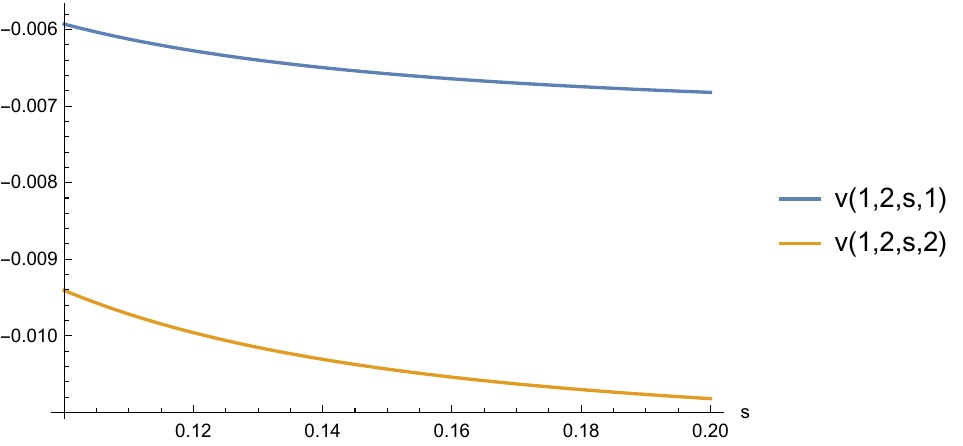}
          \caption{The value function $V$ about stock price $s$  at $t=1$ and  $x=2$ .}
        \label{uvs}
    \end{minipage}
\end{figure}
Figure \ref{5748579432a1}  shows the  the effect of $t$  on the optimal value function at $x=2, s=1$.  It is obvious that  the value function is decreasing with respect to $t$. Besides, we can also see that  $V(t,x,s,1)\ge V(t,x,s,2)$.
This is because  that the  insurer in phase  $2$ is more ``close" to the claim time than that in phase 1,  if the insurer is in phase $2$, it is expected to face the claims in the upcoming future.

Figure \ref{uvs}  shows the value of $V$ against the stock price  $s$ at $t=1$ and $x=2$. $V$ is decreasing with respect to $s$. This is because the volatility of CEV stock price increases as the stock price increases, leading to the decrease of value function.
\begin{example}
When claim size follows the exponentially distributed with the probability density function  $f(Y)$ satisfying  $f(Y)=\kappa e^{-\kappa Y}$ , where $\kappa=2$ denotes the parameter of the exponential distribution.
 \end{example}
\begin{figure}[ht]
    \centering
    \begin{minipage}[b]{0.45\textwidth}
         \centering
        \includegraphics[width=\textwidth]{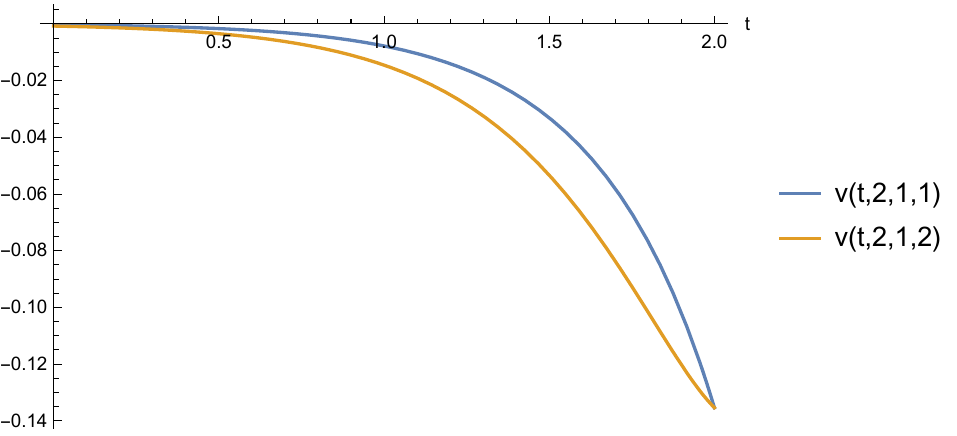}
        \caption{The value function $V$ about time $t$ at $x=2$ and  $s=1$.}
    \label{5748579432a2}
    \end{minipage}
    \quad
    \begin{minipage}[b]{0.45\textwidth}
        \centering
        \includegraphics[width=\textwidth]{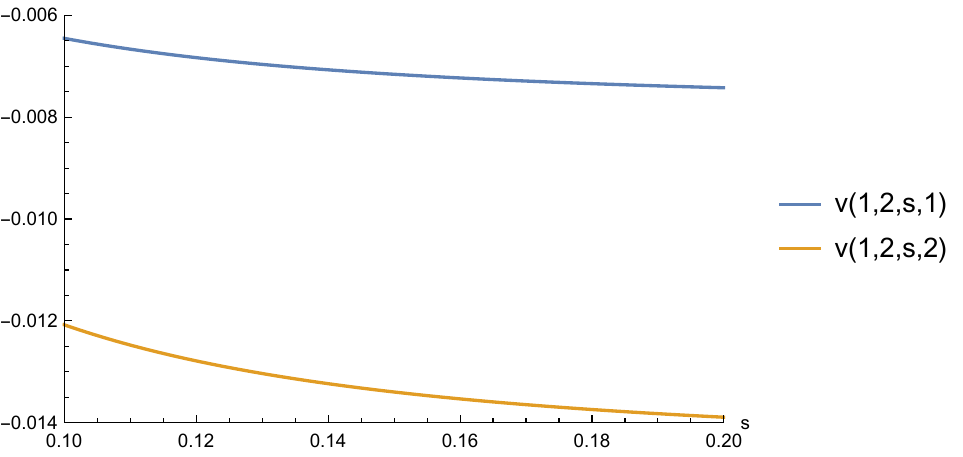}
        \caption{The value function $V$ about stock price $s$ at $t=1$ and  $x=2$.}
    \label{evs}
    \end{minipage}
\end{figure}

Figure  \ref{5748579432a2}  shows the  the effect of $t$   on the optimal value function  at $x=2, s=1$. We can see that the trend of $V$ with respect to $t$ is the same as that shown in Figures \ref{5748579432a1} and we also have $V(t,x,s,1)\ge V(t,x,s,2)$. Figure \ref{evs} shows the value of $V$ against the stock price $s$ at $t=1$ and $x=2$. The value function on Figure \ref{evs} is less than that on Figure \ref{uvs}. Although the exponential distribution with $\kappa=2$ has the same mean value $\frac{1}{2}$ as the uniform distribution on [0,1], it is possible for exponential distributed variable to take a very large value, namely a much larger value than U[0,1]. This exponential distributed claim size may increase the risk of insurance company, and thus reduce its utility.

\section{Conclusion}\label{conclusion}
This article considers the optimal investment of the renewal surplus process in which the interclaim times are Erlang($n$) distributed. By applying the  Laplace transform to the Markov chain behind the Erlang distribution, and applying  the decoupling argument and Banach fixed point theorem, we provide the  concavity of the value function which establishes the well-possedness of the optimization problem. Our results are explicit and semi-explicit optimal strategies and value functions. The optimal strategies are shown to be independent of wealth process and the current phase of the state which is consistent with the optimal  strategy  of \cite{Hipp2000} in their case.  The utility of the insurer  decreases when  time  is  close to the  next claim or the stock price is increasing.

\begin{appendices}
\section{Proof of Theorem \ref{e2daw21212}}\label{appendixA}
Denote  $\zeta:=\mathbb{E}(e^{mY})$, we consider the  matrix $\mathbb{O}$ in  the form of
\[\left(
       \begin{array}{cccccc}
         -\lambda_1 & \lambda_1 & 0 & \cdots & 0 & 0 \\
          0&  -\lambda_2& \lambda_2 &\cdots  &0 & 0 \\
          0&  0&-\lambda_3  & \cdots & 0 & 0 \\
          \vdots& \vdots & \vdots & \vdots & \vdots & \vdots \\
          0& 0 & 0 & \cdots & -\lambda_{n-1} &\lambda_{n-1}  \\
        \lambda_n \zeta &0  &0  &\cdots  & 0 & -\lambda_n \\
       \end{array}
     \right),\]
and show that, for any $t>0$,  every element of the matrix $e^{\mathbb{O}t}$ is non-negative.
Consider a  Markov chain $\tilde{J}_t$ with the  transition matrix \[\left(
       \begin{array}{cccccc}
         -\lambda_1 & \lambda_1 & 0 & \cdots & 0 & 0 \\
          0&  -\lambda_2& \lambda_2 &\cdots  &0 & 0 \\
          0&  0&-\lambda_3  & \cdots & 0 & 0 \\
          \vdots& \vdots & \vdots & \vdots & \vdots & \vdots \\
          0& 0 & 0 & \cdots & -\lambda_{n-1} &\lambda_{n-1}  \\
        \lambda_n\zeta  &0  &0  &\cdots  & 0 & -\lambda_n\zeta \\
       \end{array}
     \right).\]
From the transition matrix, we can see that
\[\mathbb{P}(\tilde{J}_t=n, t\in[0,s]|\tilde{J}_0=n)=e^{-\lambda_n \zeta s}=1-\lambda_n\zeta s+o(s),
\]
 \[\mathbb{P}(\tilde{J}_{[0,s]}=i |\tilde{J}_0=i)=1-\lambda_i s+o(s), i\neq n. \]
If the initial phase $i\neq n$, define $f_{ij}(t):=\mathbb{E}\left[\mathbf{1}_{\{X_t=j\}}|X_0=i\right]$, then
\begin{align*}
f_{ij}(t)=(1-\lambda_is)f_{ij}(t-s)+\lambda_i s f_{i+1, j}(t-s)+o(s).
\end{align*}
Thus, \[\frac{f_{ij}(t)-f_{ij}(t-s)}{s}=\frac{-\lambda_isf_{ij}(t-s)+\lambda_i s f_{i+1, j}(t-s)+o(s)}{s},\]
Letting $s\downarrow  0$ gives \begin{align}\label{34356412}
f_{ij}'(t)=-\lambda_i f_{ij}(t)+\lambda_{i}f_{i+1, j}.
\end{align}
If the initial phase $i=n$, define $f_{ij}(t)=\mathbb{E}\left[e^{\lambda_i (\zeta-1)\int_0^t\mathbf{1}_{\{\tilde{J}_u=i\}}du}\mathbf{1}_{\{X_t=j\}}|X_0=i\right]$.  We can obtain that
\begin{align}\label{787312ds}
f_{nj}(t)=e^{\lambda_n (\zeta-1)s}(1-\lambda_n \zeta s)f_{nj}(t-s)+f_{1j}(t)\lambda_n \zeta s+o(s).
\end{align}
Note that
\begin{align}\label{77974821}
e^{\lambda_n (\zeta-1)s}=1+\lambda_n (\zeta-1)s+o(s).
\end{align}
Substituting \eqref{77974821} into \eqref{787312ds} gives
\begin{align*}
\begin{split}
f_{nj}(t)&=(1+\lambda_n (\zeta-1)s)(1-\lambda_n \zeta s)f_{nj}(t-s)+f_{1j}(t)\lambda_n \zeta s+o(s)\\
&=(1-\lambda_n  s)f_{nj}(t-s)+f_{1j}(t)\lambda_n \zeta s+o(s),
\end{split}
\end{align*}
which further gives
\begin{align}\label{5647232}
\frac{f_{nj}(t)-f_{nj}(t-s)}{s}=\frac{-\lambda_n  sf_{nj}(t-s)+f_{1j}(t)\lambda_n \zeta s+o(s)}{s}.
\end{align}
Letting $s\downarrow 0$ in \eqref{5647232} gives
\begin{align}\label{121212}
f_{nj}'(t)=-\lambda_n f_{nj}(t)+f_{1j}\lambda_n \zeta.
\end{align}
Combining \eqref{34356412} and \eqref{121212}, we can see that the matrix  $\{f_{ij}\}_{n\times n }$ is the solution of $f'(t)=\mathbb{O}f(t)$ with boundary condition $f(0)=\mathbf{E}$, where $\mathbf{E}$ is the identity matrix, or in other words, $f(t)=e^{\mathbb{O} t}$. By the definition of $f_{ij}(t)$, we can see that $f_{ij}(t)$ is non-negative, which means that every element of $e^{\mathbb{O} t}$ is non-negative. The proof is complete.

\section{Proof of Theorem \ref{475sss}}\label{appendixB}
Denote $J_t$ the current phase of Erlang ($n$) distribution  at time $t$.
For any strategy $a$,
by It\^{o}'s formula,
\begin{align}\label{impr1}\begin{split}
&v(\tau_n \wedge T,X_{\tau_n \wedge T},S_{\tau_n \wedge T} , J_{\tau_n \wedge T})\\=&v(t,x,s,j)+\int_t^{\tau_n \wedge T}(v_u+(c+a_u\mu )v_{x}+\mu S_uv_s+\frac{1}{2}\sigma^2 S^{2\beta+1}_u v_{ss} \\
&+\sigma^2S^{2\beta+1}_u a_u v_{xs}+\frac{1}{2}v_{xx}\sigma^2 S_u^{2\beta}a_u^2)du+\int_t^{\tau_n \wedge T}\sigma S_u^\beta a_udW_u\\&+\sum_{t\le u\le (\tau_n \wedge T)}(v(u, X_{u}, S_u, J_u)-v(u, X_{u-}, S_u, J_{u-})),
\end{split}
\end{align}
where \begin{align}
\tau_n=n \wedge \inf\{u>t; |X^a_u|\ge n\}\wedge \inf\{u>t; |S_u|\ge n\},
\end{align}
for $n=1,2,\cdots. $ Since $v$ is a continuously differentiable solution to the HJB equation, taking expectation on both sides of \eqref{impr1} leads to
\[\mathbb{E}v(\tau_n \wedge T,X_{\tau_n \wedge T},S_{\tau_n \wedge T},J_{\tau_n \wedge T} )\le v(t,x,s,j).\]
By Fatou's lemma, letting $n\rightarrow +\infty$ gives $$\mathbb{E}[U(X_T^a)]\le v(t,x,s,j ). $$

On the other hand,  we choose the optiaml strategy $a^*$ given by \eqref{r=0a*} and  denote $X^*_t$ the corresponding surplus under strategy $a^*$. To show $\mathbb{E}[U(X_T^*)]= v(t,x,s, j), $  it suffices to prove that
\begin{align}\label{3526352a}\lim_{n\rightarrow +\infty}\mathbb{E}[v(\tau_n \wedge T,X_{\tau_n \wedge T}^*, S_{\tau_n \wedge T}, J_{\tau_n \wedge T})]= v(t,x,s,j).\end{align}
By crystall ball condition, it suffices to prove that $\mathbb{E}[v^2(t,X^*_t,S_t,J_t)]<+\infty$.
Combining \eqref{1123019456de67w2}, we get that
\begin{align*}\begin{split}
&v^2(t,X^*_t,S_t,J_t)\\=&\frac{1}{m^2}\exp\left\{-2m X^*_t +\frac{2\mu^2}{\sigma^2}(t-T)S_t^{-2\beta}\right\}\psi_{J_t}^2(t)\\
\le &M_1 \exp\left\{-2m X^*_t\right\},
\end{split}
\end{align*}
where $M_1>0$ is a constant.  Substituting $a^*$ into \eqref{sa1wqw} gives
\begin{align}\label{sas2323123}
X_t^*=x_0+\int_0^t (\mu a^*_u+c)du+\int_0^t \sigma S_u^\beta a_u^*dW_u-\sum_{0\le u\le t} (X_{u-}-X_u)\chi_{\{\triangle X_u\neq 0\}},
\end{align}
where $\chi$ is the indicator function.
Substituting \eqref{sas2323123} gives
\begin{align}\label{wq7783}\begin{split}
&\exp\left\{-2m X^{*}_t\right\}\\
=&\exp\bigg\{-2m  \bigg( x_0+\int_0^t (\mu a^*_u+c)du\\
 &+\int_0^t \sigma S_u^\beta a^*_u dW_u-\sum_{0\le u\le t} (X_{u-}-X_u)\chi_{\{\triangle X_u\neq 0\}}\bigg)\bigg\}
 \\
 \le &\exp\bigg\{-2m  \bigg( x_0+\int_0^t (\mu a^*_u+c)du+\int_0^t \sigma S_u^\beta a^*_u dW_u\bigg)\bigg\}
 \\
 \le& M_2\exp\bigg\{-2m  \bigg( \int_0^t \mu a^*_udu+\int_0^t \sigma S_u^\beta a^*_u dW_u\bigg)\bigg\}, \end{split}
\end{align}
where $M_2>0$ is a constant.
Substituting $a^*_t=\frac{\mu +\mu^2\beta(T-t)}{\sigma^{2} {S_t}^{2 \beta} m}$ into the above inequality, we obtain that
\begin{align}\begin{split}
&\exp\left\{-2m X^*_t\right\}\\
\le &M_2\exp\bigg\{-2   \int_0^t \frac{\mu^2+\mu^3\beta(T-u)}{\sigma^2}S^{-2\beta}_u du-2\int_0^t\frac{\mu+\mu^2\beta(T-u)}{\sigma }S_u^{-\beta}dW_u\bigg\}.
\end{split}
\end{align}
Note that $H_u=\frac{\mu^2+\mu^3\beta(T-u)}{\sigma^2},$ 
Then
\begin{align}
\begin{split}
\exp\left\{-2m X^*_t\right\}\le& M_2\exp\bigg\{-2\int_0^tH_uS_u^{-2\beta}du-2\int_0^t\frac{\sigma}{\mu}H_uS_u^{-\beta}dW_u\bigg\}
\\
=&M_2\exp\bigg\{-2\int_0^tH_uS_u^{-2\beta}du+4\int_0^t\frac{\sigma^2}{\mu^2}H^2_uS_u^{-2\beta}du\bigg\}\\
&\cdot\exp\bigg\{-2\int_0^t\frac{\sigma}{\mu}H_uS_u^{-\beta}dW_u-4\int_0^t\frac{\sigma^2}{\mu^2}H^2_uS_u^{-2\beta}du\bigg\}
\\:=&M_2\exp\{{H}_{1t}+{H}_{2t}\},
\end{split}
\end{align}
where ${H}_{1t}=-2\int_0^tH_uS_u^{-2\beta}du+4\int_0^t\frac{\sigma^2}{\mu^2}H^2_uS_u^{-2\beta}du$, ${H}_{2t}=-2\int_0^t\frac{\sigma}{\mu}H_uS_u^{-\beta}dW_u-4\int_0^t\frac{\sigma^2}{\mu^2}H^2_uS_u^{-2\beta}du$.
By H\"{o}lder's inequality, we obtain that
\begin{align}
\mathbb{E}\left(\exp\left\{-2m X^*_t\right\}\right)\le M_2\left(\mathbb{E}\left(e^{2H_{1t}}\right)
\right)^{\frac{1}{2}}\left(\mathbb{E}\left(e^{2{H}_{2t}}\right)
\right)^{\frac{1}{2}}.
\end{align}
Notice that $\Gamma =\sup_{u \in [0,T]}\left\{-4H_u+8\frac{\sigma^2}{\mu^2}H^2_u\right\}=\frac{4 \mu^{2}+12 \mu^{3} \beta T+8 \mu^{4} \beta^{2} T^{2}}{\sigma^{2}},$ applying Theorem 5.1 of \cite{zeng2013} gives
 $$\mathbb{E}\left(e^{2{H}_{1t}}\right)<+\infty.$$
By Lemma 4.3 of \cite{zeng2013},  it is obtained that $e^{2{H}_{2t}}$ is a martingale, therefore $$\mathbb{E}\left(e^{2{H}_{2t}}\right)<+\infty.$$

As this point, we have shown that under the condition, the solution to the HJB equation is indeed the value function of the optimization problem. The optimal policy is given in terms of the value function as shown in \eqref{23u2i2323}.

\section{Proof of Theorem \ref{2546090}}\label{appendixC}

Similar to the proof of Theorem \ref{475sss}, it is sufficient to show that
if the optimal strategy $a^*$ defined  in \eqref{asasss111} is applied,  it holds that
\[\lim_{n\rightarrow +\infty}\mathbb{E}v(\tau_n \wedge T,X_{\tau_n \wedge T}^*, S_{\tau_n \wedge T}, J_{\tau_n \wedge T})= v(t,x,s,j), j=1,2,\cdots, n.\]
Thus, it requires that, $V(\tau_n \wedge T,X_{\tau_n \wedge T}^*,S_{\tau_n \wedge T},J_{\tau_n \wedge T} )$ is uniformly integrable. By calculation, for some constants $\hat{M}_1>0, \hat{M}_2>0$,
\begin{align}\label{sasa12112dd1}\begin{split}
&v^2(t,X^*_t,S_t,J_t)\\=&\frac{1}{m^2}\exp\left\{-2m X^*_t e^{r(T-t)}-\frac{2(\mu - r)^2}{4\sigma^2\beta r}[1-e^{2\beta r(t-T)}]S_t^{-2\beta}\right\}\psi_{J(t)}^2(t)\\\le &
\hat{M}_1\exp\left\{-2m X^*_t e^{r(T-t)}-\frac{(\mu - r)^2}{2\sigma^2\beta r}[1-e^{2\beta r(t-T)}]S_t^{-2\beta}\right\}\\
\le &\hat{M}_2 \exp\left\{-2m  e^{r(T-t)}X^*_t\right\}.
\end{split}
\end{align}
Substituting $a^*$ into \eqref{sa1wqw} gives
\begin{align}\label{sas23231}\begin{split}
X_t^*=&e^{rt}x_0+\int_0^t e^{r(t-u)}[(\mu-r)a^*_u+c]du+\int_0^t e^{r(t-u)}\sigma S_u^\beta a_u^*dW_u\\
&-\sum_{0\le u\le t} e^{r(t-u)}(X_{u-}-X_u)\chi_{\{\triangle X_u\neq 0\}}.
\end{split}
\end{align}
Substituting \eqref{asasss111} and \eqref{sas23231} into \eqref{sasa12112dd1} gives
\begin{align*}\begin{split}
 &\exp\left\{-2m  e^{r(T-t)}X^*_t\right\}\\
 =&\exp\bigg\{-2m  e^{r(T-t)}\bigg( e^{rt}x_0+\int_0^t e^{r(t-u)}[(\mu-r)a^*_u+c]du\\
 &+\int_0^t e^{r(t-u)}\sigma S_u^\beta a^*_u dW_u-\sum_{0\le u\le t} e^{r(t-u)}(X_{u-}-X_u)\chi_{\{\triangle X_u\neq 0\}}\bigg)\bigg\}\\
 \le&\hat{M}_3\exp\bigg\{-2 \int_0^t (\mu-r)\frac{(\mu-r)+(1-e^{2\beta r(T-u)})\frac{(\mu-r)^2}{2 r} }{\sigma^2 }S_u^{-2\beta}du\\
 &-2 \int_0^t \sigma\frac{(\mu-r)+(1-e^{2\beta r(T-u)})\frac{(\mu-r)^2}{2 r} }{\sigma^2 }S_u^{-\beta}dW_u\bigg\}\\
 =&\hat{M}_3\exp\bigg\{\int_0^t-2 (\mu-r)\tilde{H}_{u}S_u^{-2\beta}du+\int_0^t4\sigma^2\tilde{H}_{u}^2S_u^{-2\beta}du\\&+\int_0^t-2 \sigma \tilde{H}_{u}S_u^{-\beta}dW_u-\int_0^t4\sigma^2 \tilde{H}_{u}^2S_u^{-2\beta}du\\:=&M_3\exp\{\tilde{H}_{1t}+\tilde{H}_{2t}\},
 \end{split}
\end{align*}
where $\hat{M}>0$ is a  constant and
\begin{align*}
&\tilde{H}_u=\frac{(\mu-r)+(1-e^{2\beta r(T-u)})\frac{(\mu-r)^2}{2 r} }{\sigma ^2},\\
&\tilde{H}_{1t}=\int_0^t-2 (\mu-r)\tilde{H}_{u}S_u^{-2\beta}du+\int_0^t4\sigma^2\tilde{H}_{u}^2S_u^{-2\beta}du,
\\
&\tilde{H}_{2t}=\int_0^t-2 \sigma \tilde{H}_{u}S_u^{-\beta}dW_u-\int_0^t4\sigma^2\tilde{H}_{u}^2S_u^{-2\beta}du.
\end{align*}
By Cauchy-Schwarz inequality, we have
\begin{align*}
\mathbb{E}\big(\exp\left\{-2m  e^{r(T-t)}X^*_t\right\}\big)\le M_3\mathbb{E}\big(e^{\tilde{H}_{1t}+\tilde{H}_{2t}}\big)\le M_3\big(\mathbb{E}(e^{2\tilde{H}_{1t}})\big)^{\frac{1}{2}}\big(\mathbb{E}(e^{2\tilde{H}_{2t}})\big)^{\frac{1}{2}}.
\end{align*}
Notice that $\iota=\sup_{u\in[0,T]}\left\{ -4 (\mu-r)\tilde{H}_{u} +8\sigma^2\tilde{H}_{u}^2\right\}=\frac{4(\mu-r)^2}{\sigma^2}$, applying Theorem 5.1 of \cite{zeng2013} gives
%
\begin{align*}
\mathbb{E}(e^{2\tilde{H}_{1t}})<+\infty.
\end{align*}
By Lemma 4.3 of \cite{zeng2013}, we see that $e^{2\tilde{H}_{2t}}$ is a martingale, then
\begin{align*}
\mathbb{E}(e^{2\tilde{H}_{2t}})<+\infty.
\end{align*}

As this point, we have shown that under the condition, the solution to the HJB equation is indeed the value function of the optimization problem. The optimal policy is given in terms of the value function as shown in \eqref{657657}.

\end{appendices}
\noindent {\\ \bf Acknowledgmen} This research is supported by the National Natural Science Foundation of China under grant No.12201104.
\noindent {\\ \bf Disclosure statement and Data availability statement} The authors report that there are no competing interests to declare and there is no data set associated with the paper.

\end{document}